\documentclass{article}
\usepackage[letterpaper, textwidth=5.0in]{geometry}

\usepackage{amssymb,amsbsy,amsmath,amsfonts,amssymb,amscd,amsthm}
\usepackage{graphicx}
\usepackage{xcolor}
\usepackage{cancel}
\usepackage{hyperref}
\usepackage{ulem}
\usepackage{subcaption}
\usepackage{authblk}
\usepackage{comment}

\newtheorem{dummy}{dummy}[section]
\newtheorem{Theorem}[dummy]{Theorem}
\newtheorem{proposition}[dummy]{Proposition}
\newtheorem*{Proposition*}{Proposition}
\newtheorem{lemma}[dummy]{Lemma}

\newtheorem{defi}[dummy]{Definition}

\title{
A stable and efficient Galerkin spectral method 
with asymmetrically-weighted Hermite functions 
for the Vlasov–Poisson system
}
\author{ 
Ruiyang Dai
\thanks{%
\url{ruiyang.dai@inria.fr}} 
}
\affil{\small
Project-Team Makutu, Inria, \\ 
University of Pau and Pays de l'Adour, \\ 
Pau, France.}
\date{}

\begin{document}
\maketitle
\begin{abstract}
We analyze a Galerkin spectral method applied to 
the Vlasov–Poisson (VP) system 
based on time-independent asymmetrically-weighted (AW) 
Hermite functions in velocity. 
The VP system is written as an hyperbolic system 
using AW Hermite functions in velocity.
Unlike the classical Petrov--Galerkin spectral method, 
which is not stable, 
the proposed Galerkin spectral method admits a natural stability 
in the unweighted \(L^2\) norm. 
This stability property enables a rigorous convergence analysis of the method. 
For sufficiently regular solutions with exponential decay in velocity, 
we establish error estimates between the exact and numerical solutions 
and prove convergence of the Galerkin spectral method. 
In particular, the method achieves spectral convergence in Sobolev spaces, 
with convergence rates determined by the regularity of the exact solution. 
Since the Gram matrix arising from the proposed Galerkin method is dense, 
we derive an equivalent form that 
retains the same approximation properties 
while preserving the sparsity structure 
similar to the classical Petrov–Galerkin method.
\end{abstract}

\tableofcontents
\section{Introduction}

The VP system describes the microscopic dynamics of 
a collisionless plasma under the assumption that 
the electric field is electrostatic \cite{chen1984introduction,goldston2020introduction}. 
The system consists of the Vlasov equation, 
which governs the evolution of the particle distribution function in phase space, 
coupled with Poisson's equation, 
which determines the self--consistent electric field 
generated by the plasma charge density. 
This coupling is intrinsic: 
the particle distribution determines the charge density 
and hence the electric field, 
while the electric field influences the motion of the particles.

The VP system has an intrinsic complexity 
due to the fact that the distribution function 
is defined in a six--dimensional phase space, 
comprising three spatial and three velocity dimensions. 
The system is also highly multiscale: 
plasma phenomena span a wide range of spatial and temporal scales, 
with several orders of magnitude of scale separation 
between microscopic and macroscopic scales. 
Consequently, developing numerical methods that can accurately 
capture the large--scale dynamics of collisionless plasmas 
while retaining the relevant microscopic physics 
remains a central challenge in computational plasma physics.

There are three major classes of numerical methods 
commonly used to solve the Vlasov equations, 
distinguished by how the plasma distribution function, 
or phase-space density, is represented. 
In Particle--In--Cell (PIC) methods, 
the plasma is represented by a finite number of computational macroparticles 
that move through a spatial mesh, 
while the self--consistent electric field 
is computed on the mesh \cite{birdsall2018plasma}. 
In Eulerian methods, 
the distribution function is directly discretized 
on a mesh covering both configuration and velocity space, 
resulting in a six-dimensional computational problem \cite{cheng1976integration,filbet2001conservative,filbet2003comparison}. 
Thirdly, spectral methods represent the velocity part of the distribution function using an appropriate set of basis functions. This leads to a system of evolution equations for the expansion coefficients, which depend only on the configuration-space variables \cite{engelmann1963nonlinear,armstrong1970solution}.

PIC methods provide a natural way of describing the motion of individual plasma particles. By approximating the distribution function with a finite number of macroparticles, they can achieve satisfactory results with a relatively small number of computational particles and are therefore widely used in plasma simulations \cite{birdsall2018plasma}. 
However, a fundamental limitation of these methods is their intrinsic statistical noise. Since the distribution function is represented through a finite particle sample, the statistical error typically decreases only slowly as the number of particles increases, following the well-known inverse-square-root scaling. Consequently, obtaining a sufficiently accurate and low-noise representation of the distribution function can require a prohibitively large number of particles, particularly in applications where fine-scale features of the distribution function or accurate higher-order quantities are of interest.

Spectral methods provide an alternative approach that can offer high-order accuracy in the numerical approximation of the VP system.
The expansion of the velocity variable in terms of Hermite functions 
reformulates the VP system 
\eqref{eqn:vlasov_poisson_system} as a hyperbolic system. 
The idea of representing the velocity dependence of the distribution function using polynomials multiplied by a Gaussian function, rather than directly discretizing the velocity variable, dates back to the 1960s \cite{armstrong1966numerical,joyce1971numerical}.
The use of Hermite functions within 
the framework of the Petrov--Galerkin method 
in the velocity variable offers several important advantages. 
Hermite functions are orthogonal with respect to a suitable weighted inner product 
and naturally account for the decay of the distribution function 
in the unbounded velocity domain, $-\infty < v < \infty$. 
Moreover, the recurrence relations satisfied by the Hermite functions 
result in a tridiagonal coupling structure for the resulting system, 
leading to a sparse and computationally efficient formulation.
Although Petrov-Galerkin spectral methods based on Hermite functions 
have been investigated previously, 
they were initially considered less attractive 
because of their poor resolution properties \cite{gottlieb1977numerical}. 
Studies in \cite{tang1993hermite,holloway1996spectral,schumer1998vlasov}, 
however, have shown that an appropriate choice of scaling can 
make Hermite expansions highly competitive for 
functions with approximately Gaussian profiles. 
In particular, previous studies of the Vlasov equations \cite{gagne1977splitting,joyce1971numerical} have shown that, 
without an appropriate scaling of the Hermite functions in velocity, more than a thousand Hermite modes may be required to achieve only moderate accuracy. 
For plasma physics simulations, authors in \cite{schumer1998vlasov} 
demonstrated that Hermite-based spectral methods 
can be efficient when an appropriate velocity scaling is employed.
Motivated by these results, we use rescaled, time-independent Hermite functions in the velocity variable. This choice provides an efficient representation of the velocity dependence while preserving the favorable structural properties of the Hermite basis.

In \cite{holloway1996spectral}, authors formalized two possible approaches using rescaled Hermite functions.
The first one is based on the symmetrically-weighted (SW) 
Hermite functions as the basis in velocity
and as test functions in the Galerkin method,
corresponding to the choice of weight function $\omega(v)=1$.
It shows numerically that this SW method cannot simultaneously conserve mass, momentum and total energy.
However, it conserves the $L^2$ norm of the distribution function, which ensures the stability of the method.
The second approach utilizes AW Hermite functions as trial functions
and a distinct set of test functions orthogonal to the AW Hermite basis
in the Petrov–Galerkin method.
In this case, the test functions are constructed
from AW Hermite functions together
with a weight function depending on $v$.
This approach yields the simultaneous conservation of
mass, momentum, and total energy.
However, it also shows numerically that the method based on AW Hermite functions
does not conserve the $L^2$ norm of the distribution function
and is then not numerically stable.
This raises the question of whether an error estimate can be established for the method based on AW Hermite functions.

Some earlier works on the convergence analysis of spectral methods in unbounded domains have been given by \cite{funaro1991approximation,fok2002combined} 
on Hermite spectral approximations. 
Although there is substantial literature 
on spectral methods based on Hermite functions 
in approximating the solutions to the VP system, 
there are not many works on convergence analysis for these numerical schemes. 
For the spectral method based on SW Hermite functions, 
a convergence theory was proposed in \cite{manzini2017convergence}. 
For the spectral method based on AW Hermite functions, recently, 
\cite{bessemoulin2023convergence} succeded in establishing a convergence theory 
with a time--dependent weight norm for the VP system. 
However, there is no estimate of the classically 
time--independent weighted norm for the solution to the VP system.

Recently in \cite{dai2026galerkin}, 
we provide a stable spectral method based on AW Hermite functions
in the standard $L^2$ space for the VP system. 
The main idea is to use the Galerkin method instead of 
the Petrov-Galerkin method which is quadratically stable with
respect to the standard $L^2$ norm.
Motivated by \cite{dai2024quadratic,dai2026galerkin}, 
we study the Galerkin spectral method 
based on AW Hermite functions in the $L^2$ space. 

For ease of notation, we consider the normalized form of the VP system with boundary and initial conditions:
We examine a non-collisional plasma consisting of charged particles (electrons and ions). 
For simplicity, we consider a one-dimensional model of the plasma and only electrostatic forces, 
ignoring any electromagnetic effects.
One searches the electron distribution function $f(t,x,v)$ by solving 
the VP system of the plasma in dimensionless variables
\begin{equation}\label{eqn:vlasov_poisson_system}
\left\{
\begin{aligned}
& \frac{\partial f}{\partial t} 
 +v \frac{\partial f}{\partial x} 
 +E(t, x) \frac{\partial f}{\partial v} = 0, \\
& \frac{\partial E(t,x)}{\partial x} = \rho - \rho_0, \\
& f(t=0) = f_0, \\
\end{aligned}
\right.
\end{equation}
where $E(t,x)$ is the electrostatic field, 
and the density $\rho$ is given by
$$
  \rho(t, x) 
= \int_{\mathbb{R}} f(t, x, v) \,dv, 
  \quad t \geq 0, \,x \in\Omega_x = [0,x_{\text{max}}[. \\
$$
The constant \( \rho_0 \) ensures the quasi-neutrality condition of the plasma
\begin{equation}\label{eqn:quasi-neutrality-condition}
\int_{\Omega_x} (\rho - \rho_0) \, dx = 0.
\end{equation}

The first main result of this paper is as follows.
If the solutions of the VP system \eqref{eqn:vlasov_poisson_system} 
decay exponentially to zero as $|v| \to \infty$, 
then the error between the exact solution $f$ and 
the numerical solution given by the Galerkin spectral method 
based on AW Hermite/Fourier functions satisfies
\[
  \|\text{Error}\|
= \mathcal{O}\left( N^{-\frac{r-1}{2}} \right)
+ \mathcal{O}\left( J^{-s} \right),
\]
where $J$ is the degree of trigonometric polynomials used in the $x$--direction,
$N$ is the number of the basis functions used in the $v$--direction, 
and $r$, $s$ are numbers depending on 
the regularity of the exact solution of \eqref{eqn:vlasov_poisson_system}.

The second main result of this paper is as follows.
Since we adopt a Galerkin method rather than 
the classical Petrov--Galerkin method 
for the hyperbolic system induced by 
the spectral approximation in the velocity variable,
the projection is non-orthogonal,
which involves a Gram matrix.
Since the Gram matrix is dense, 
the direct use of the method can incur substantial computational costs. 
To address this issue, we derive an equivalent
form of the proposed Galerkin method 
that retains the same approximation properties 
whilc preserving the sparsity structure.
This requires a careful study of the Gram matrix. 
Although some properties of this matrix have been 
established in previous works \cite{dai2024quadratic,dai2026galerkin}, 
a complete analysis is still needed in the present setting. 
In this article, we provide a more detailed study of the Gram matrix 
and use these results to derive an equivalent form. 

The contents of this paper are organized as follows. 
In Section \ref{sec:preliminaries}, we introduce the necessary preliminaries. 
Section \ref{sec:hermite-approximations} is devoted to Hermite approximations, 
where we study both the orthogonal and non--orthogonal projections 
and establish the approximation results needed for the subsequent analysis. 
In Section \ref{sec:fourier-approximations}, 
we consider Fourier approximations and 
present the corresponding approximation properties. 
Section \ref{sec:hermite-fourier-spectral-method} presents 
the Hermite/Fourier spectral method, 
including the Hermite spectral discretization in the velocity variable and 
the Fourier spectral discretization in the spatial variable.
The convergence analysis is carried out in Section \ref{sec:convergence}. 
We first derive the global projection error, 
then analyze the consistency error, 
and finally combine these estimates to establish the main convergence result. 
The analysis of the Gram matrix used in the equivalent form 
is provided in Section \ref{sec:efficient_form}.
Finally, Section \ref{sec:conclusion} concludes the paper. 

\section{Preliminaries}\label{sec:preliminaries}

We introduce a Gaussian function $\sigma$, satisfying
\[
0 < \sigma(v) < \infty, \qquad
0 \le \int_{\mathbb{R}} v^n \sigma(v)\,dv < \infty,
\quad \forall n \in \mathbb{N}.
\]
We define the weighted polynomial space 
in terms of a weight function $\omega$
\[
V^{\omega} = \operatorname{span}\{v^n \sigma(v)\}_{n \in \mathbb{N}},
\]
which constitutes an infinite-dimensional linear space, equipped with the weighted inner product
\[
(g,h)_{\omega dv} := \int_{\mathbb{R}} g(v) h(v) \omega(v)\,dv < \infty,
\qquad g,h \in V^\omega.
\]
We introduce the standard $L^2(\mathbb{R})$ inner product
\begin{equation}\label{eqn:inner_product_L2}
(g,h)_{dv} := \int_{\mathbb{R}} g(v) h(v) \,dv < \infty,
\qquad g,h \in V^\omega.
\end{equation}
We assume that 
$(g,h)_{\omega dv} < \infty$ implies $(g,h)_{dv} < \infty$.

For a non-negative integer $N \in \mathbb{N}$, let $V_N^{\omega}$ 
be a closed subspace of $V^\omega$ with $\dim(V_N^{\omega}) = N+1$.
Let $\{\phi_0,\phi_1,\ldots,\phi_k,\ldots\}$ be a basis of $V^{\omega}$ and
$\{\varphi_0,\varphi_1,\ldots,\varphi_N\}$ be a basis of $V_N^{\omega}$, respectively.
Since $V_N^{\omega}$ is a subspace of $V^{\omega}$, there exists a matrix
$P_b \in \mathbb{R}^{(N+1)\times\infty}$ with full row rank such that
\[
(\varphi_0,\varphi_1,\ldots,\varphi_N)^\top = P_b (\phi_0,\phi_1,\ldots,\phi_k,\ldots)^\top.
\]

A linear bounded operator $\mathcal{P}:V^{\omega} \to V_N^{\omega}$ is called a projection
operator on $V_N^{\omega}$ if
$\mathcal{P}g \in V_N^{\omega}$ for all $g \in V^{\omega}$, and
$\mathcal{P}g = g$ for all $g \in V_N^{\omega}$.

For any $g \in V^{\omega}$, there exist coefficients $\widehat{g}_i$, $i=0,1,\ldots$, such that
\[
g = \sum_{i=0}^\infty \widehat{g}_i \phi_i.
\]
Since $\mathcal{P}g \in V_N^{\omega}$, there exist $\bar g_i$, $i=0,\ldots,N$, such that
\[
\mathcal{P}g = \sum_{i=0}^N \bar g_i \varphi_i.
\]
Since $\mathcal{P}$ is linear, there exists a unique matrix
$P_p \in \mathbb{R}^{(N+1)\times\infty}$ satisfying
\[
  (\bar{g}_0,\bar{g}_1,\ldots,\bar{g}_N)^\top 
= P_p \, (\widehat{g}_0,\widehat{g}_1,\ldots,\widehat{g}_k\ldots)^\top 
\]
such that
\begin{equation}
  \mathcal{P}g 
= \left\langle 
  P_b \,(\phi_0,\phi_1,\ldots,\phi_k,\ldots)^\top, 
\,P_p \,(\widehat{g}_0,\widehat{g}_1,\ldots,\widehat{g}_k\ldots)^\top 
  \right\rangle,
\end{equation}
where $\langle\cdot,\cdot\rangle$ denotes the inner product of vectors.

Since $\mathcal{P}$ is a linear bounded projection operator, we have
\[
\|P_p\| < \infty,
\qquad
P_b \,P_p^\top = I_{N+1},
\]
where $\|\cdot\|$ is a matrix norm and $I_{N+1}$ is the $(N+1)\times (N+1)$ identity matrix.
Clearly, the projection operator $\mathcal{P}$ is uniquely determined by $P_p$, 
and thus we may directly use $P_p$ to denote the projection on the weighted polynomial space.

In particular, for the classical orthogonal projection, i.e.,
\[
\|\mathcal{P}g - g\|_{L^2(\omega dv)} \le \|f - g\|_{L^2(\omega dv)}, 
\qquad \forall f \in V_N^{\omega},
\]
we have
\begin{equation}
P_p
=
\left(\left( \varphi_i,\varphi_j\right)_{\omega dv}\right)^{-1}_{(N+1)\times(N+1)}
\, P_b \,
\left(\left( \phi_i,\phi_j \right)_{\omega dv}\right)_{\infty\times\infty}.
\end{equation}
Furthermore, if $\varphi_i = \phi_i$ for $i=0,\ldots,N$ and
\[
(\varphi_i,\phi_j)_{\omega dv} = 0,
\quad \forall i=0,\ldots,N,\; j=N+1,N+2,\ldots,
\]
then the orthogonal projection reduces to a cut-off operator and
\[
P_b = P_p = \left(\, I_{N+1} \;\; 0 \,\right).
\]

It is also possible to consider the standard $L^2(\mathbb{R})$ inner product of 
the infinite-dimensional linear space $V^\omega$ when $f \in V^\omega$.
Specifically, we have
\begin{equation}
P_p
=
\left(\left( \varphi_i,\varphi_j\right)_{dv}\right)^{-1}_{(N+1)\times(N+1)}
\, P_b \,
\left(\left( \phi_i,   \phi_j   \right)_{dv}\right)_{\infty\times\infty}.
\end{equation}

Since the approximation method considered below 
is formulated in the finite-dimensional space obtained by truncating at $N$, 
we partition any infinite matrix $Q = (q_{nm})_{n,m \geq 0}$ 
and vector $Y = (y_n)_{n\geq 0}$ according to
\[
\renewcommand{\arraystretch}{1.5}
Q =
\begin{bmatrix}
Q_{11}^N & Q_{12}^N \\
Q_{21}^N & Q_{22}^N \\
\end{bmatrix},
\qquad
Y =
\begin{bmatrix}
Y_{1}^N \\
Y_{2}^N \\
\end{bmatrix},
\]
where
\[
\begin{aligned}
Q_{11}^N &= (q_{nm})_{0\leq n,m\leq N}, &&
Q_{12}^N  = (q_{nm})_{0\leq n\leq N, \, N+1\leq m}, \\
Q_{21}^N &= (q_{nm})_{N+1\leq n, \, 0\leq m\leq N}, &&
Q_{22}^N  = (q_{nm})_{n,m\geq N+1},
\end{aligned}
\]
and
\[
Y_1^N = (y_n)_{0\leq n\leq N},
\qquad
Y_2^N = (y_n)_{n\geq N+1}.
\]
Unless otherwise specified,
all blocks $Q_{11}, Q_{12}, Q_{21}, Q_{22}$ and $Y_{1}, Y_{2}$
are understood to be associated with the truncation $N$,
and the superscript $\cdot^N$ is omitted throughout to simplify the notation.

\section{Hermite approximations}\label{sec:hermite-approximations}

Several types of Hermite approximations can be considered, 
depending on the choice of the basis functions 
and the weight function \( \omega(v) \).
A classical approach is to use 
the standard Hermite polynomials as basis functions; 
see, for example,
\cite{szeg1939orthogonal,gottlieb1977numerical,hussaini1986spectral,bernardi1997spectral,guo1999error}. 
In this case, \( \sigma(v) = 1 \), 
and to obtain the orthogonal system \( \omega(v) = \exp(-v^2) \). 
The second approach is to use the Hermite functions, 
obtained by combining Hermite polynomials with a Gaussian function, 
as base functions. 
In \cite{funaro1991approximation}, 
Hermite polynomials with \( \sigma(v)=\exp(-v^2/(4a^2)) \) 
are employed as basis functions, 
with the corresponding weight function given by \( \omega(v)=\exp(v^2/(4a^2)) \).
The authors established approximation results 
for a class of diffusion evolution equations.
Similarly, in \cite{fok2002combined}, 
Hermite polynomials with \( \sigma(v) = \exp(-\alpha^2 v^2) \)
are used as basis functions, 
together with the weight function \( \omega(v) = \exp(\alpha^2 v^2) \). 
Approximation results were obtained in the analysis of 
the spectral method based on AW Hermite functions for the Fokker--Planck equation.

In general, the choice of \( \sigma(v) \) depends on 
the asymptotic behaviour of the solutions of the considered problems. 
In many problems arising in plasmas physics, 
the solutions decay exponentially as \( |v| \to \infty \). 
In this case, it is reasonable to take the basis functions 
as those used in \cite{funaro1991approximation,fok2002combined}.

Following the second approach,
for the VP system, 
\cite{manzini2017convergence} employed 
the SW Hermite functions 
(the Gaussian function is \( \sigma(v) = \exp(-v^2/(4a^2) \)) 
as basis functions with the weight function  
\( \omega(v) = \exp(v^2/(4a^2)) \). 
The authors derived approximation results 
for the spectral method with SW Hermite functions applied to the VP system.
More recently,
\cite{bessemoulin2023convergence} considered 
AW Hermite functions (\( \sigma(v) = \exp( -(\alpha(t) v)^2/2) \))
where the scaling parameter \( \alpha(t) \) is time-dependent.
The corresponding time-dependent 
weight function is \( \omega(v) = \exp( (\alpha(t) v)^2/2 ) \). 
This work established the first approximation result 
for the spectral method with time--dependent AW Hermite functions 
applied to the VP system.

The stability properties of Hermite-based discretizations for the VP system 
have also been investigated. 
In \cite{dai2024quadratic}, 
the authors analyzed the instability that 
may arise in the discretization of linear transport equations 
based on AW Hermite functions, 
with particular relevance to numerical methods 
for collisionless plasma physics. 
In \cite{dai2026galerkin}, the same issue was investigated for the VP system. 
However,
to the best of our knowledge, 
there is no estimate currently available for 
the associated time-independent weighted norm of solutions 
to the VP system.
Motivated by these works, we study the convergence of the Galerkin method
based on AW Hermite functions for the VP system. 
In contrast to the aforementioned weighted formulations, 
we consider the unweighted inner product.

Let ${H}_n(v)$ be the Hermite polynomial \cite{nist} of degree $n$,
\begin{equation}
{H}_n(v) = (-1)^n e^{v^2} \dfrac{d^n}{dv^n}(e^{-v^2}),
\end{equation}
which is orthogonal on $(-\infty, \infty)$ with respect to the Gaussian function $\exp(-v^2)$.
We choose 
$$
\sigma(v) = \exp(-\frac{v^2}{T})
$$
as the Gaussian function, 
where $T > 0$ is a constant related to the thermal velocity.
The AW Hermite functions $\psi_n(v)$ are given by
\begin{equation}
  \psi_n(v) 
= \left( {2^n n! \sqrt{\pi T}} \right)^{-\frac12}
  {H}_n(\frac{v}{\sqrt{T}}) \sigma(v),
\quad n \geq 0.
\end{equation}
Next, we choose 
\[
\omega(v) = \exp(\frac{v^2}{T}),
\]
and the set of functions $(\psi_n(v))_n$ is an orthogonal system satisfying
$$
  \left( \psi_n, \psi_m \right)_{\omega dv}
= \int_{\mathbb{R}} \psi_n(v) \psi_m(v) \omega(v)\,dv 
= \delta_{nm},
$$
where $\delta_{nm}$ is the Kronecker delta function.
The function $\psi_n(v)$ is the $n$-th eigenfunction of the following singular 
Liouville problem:
\begin{equation}
\dfrac{d}{dv} 
\left( 
  e^{\frac{-v^2}{T}} \dfrac{d}{dv} 
  \left( 
  e^{\frac{ v^2}{T}} f(v)
  \right)
\right)
+ \lambda f(v)
= 0, \quad v \in \mathbb{R}.
\end{equation}
The corresponding eigenvalues are $\lambda_n = \frac{2n}{T}$.
It can be shown that, $\forall n \geq 0$,
\begin{equation}
\begin{aligned}
  v \psi_n(v) 
&=\sqrt{T}
  \left(
    \sqrt{\dfrac{n+1}{2}} \psi_{n+1}(v) 
  + \sqrt{\dfrac{n}{2}}   \psi_{n-1}(v) 
  \right), \\
  \dfrac{d\psi_n(v)}{dv}
&=\dfrac{-1}{\sqrt{T}} \sqrt{2(n+1)} \psi_{n+1},
\end{aligned}
\end{equation}
where $\psi_n(v) := 0$ for $n < 0$.

Let the weighted polynomial space 
$V^\omega = \operatorname{span}\{\psi_n \}_{n \in \mathbb{N}}$,
equipped with the norm
\[
  \left|\left| g \right|\right|_{L^2(\omega dv)}
= \left( \int_{\mathbb{R}} |g(v)|^2 \omega(v)\,dv \right)^{\frac12}. 
\]
The standard $L^2(dv)$ norm is
\[
  \left|\left| g \right|\right|_{L^2(dv)}
= \left( \int_{\mathbb{R}} |g(v)|^2 \,dv \right)^{\frac12}. 
\]
For any non-negative integer $r$,
\[
H^r(\omega dv) = \left\{ g \big| \dfrac{d^k g}{d v^k} \in L^2(\omega dv), 0 \leq k \leq r \right\}
\]
with the following semi-norm
\[
\left| g \right|_{H^r(\omega dv)} = \left|\left| \dfrac{\partial^r g}{\partial v^r} \right|\right|_{L^2(\omega dv)}.
\]
and norm
\[
  \left|\left| g \right|\right|_{H^r(\omega dv)}
= \left( \sum_{k=0}^r |g|^2_{H^k(\omega dv)} \right)^{\frac12}.
\]
For any $g \in V^\omega$, we can expand $g$ in the following form:
\begin{equation}
g(v) = \sum_{n=0}^{\infty} \widehat{g}_n \psi_n(v)
\end{equation}
with the Hermite coefficients
$$
\widehat{g}_n = \int_{\mathbb{R}} g(v) \psi_n(v) \omega(v) dv, \quad n \geq 0.
$$
Next, we choose a non negative integer $N$ and 
let $V_N^\omega = \operatorname{span}\{\psi_n \}_{n \leq N}$ 
be the finite-dimensional weighted polynomial space.

\subsection{Orthogonal projection}
We consider the orthogonal projection 
$\mathcal{P}: V^\omega \to V_N^\omega$ that is a mapping such that,
$\forall g \in V^\omega$,
\[
(g - \mathcal{P} g, \phi)_{\omega dv} = 0, \quad \phi \in V_N^\omega.
\]
We have $P_p = P_b = \left( I_{N+1} \ 0 \right)$.
Equivalently,
\[
  \mathcal{P} g(v) 
= \sum_{n=0}^N \sum_{m=0}^\infty \left( P_p\right)_{nm} \widehat{g}_m \psi_n(v)
= \sum_{n=0}^N \widehat{g}_n \psi_n(v).
\]
We also introduce the operator $\mathcal{A}$ as
\begin{equation}
  \mathcal{A}g(v)
=-\dfrac{d}{dv} 
  \left( 
    e^{-\frac{v^2}{T}} 
    \dfrac{d}{dv}
    \left( g(v)e^{\frac{v^2}{T}} \right)
  \right).
\end{equation}
\begin{Theorem}
$\forall g \in H^r(\omega dv)$ and $r \geq 0$,
\begin{equation}
|| g - \mathcal{P} g ||_{L^2(\omega dv)}
\leq C \left( \dfrac{N}{T} \right)^{-\frac{r}{2}} 
|| g ||_{H^r(\omega dv)}. 
\end{equation}
\end{Theorem}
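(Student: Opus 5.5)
Since $\psi_n$ is an orthonormal basis of $L^2(\omega dv)$, the projection error has the explicit form
\[
\|g-\mathcal{P}g\|^2_{L^2(\omega dv)} = \sum_{n>N}|\widehat g_n|^2 .
\]
The problem therefore reduces to controlling the decay of the Hermite coefficients by the weighted Sobolev norm of $g$. The natural tool is the derivative recurrence
\[
\frac{d\psi_n}{dv}=-\sqrt{2(n+1)/T}\,\psi_{n+1}.
\]
From it, $\partial_v g=\sum_n \widehat g_n\,\partial_v\psi_n$ has Hermite coefficients
\[
\widehat{(\partial_v g)}_{n+1}=-\sqrt{2(n+1)/T}\,\widehat g_n ,
\]
with $\widehat{(\partial_v g)}_0=0$. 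I would justify the termwise differentiation for $g\in H^1(\omega dv)$ by integrating by parts in $\widehat{(\partial_v g)}_{m}=\int \partial_v g\,\psi_m\omega\,dv$. Here $\psi_m\omega$ is the polynomial $c_m H_m(v/\sqrt T)$, and the identity $\frac{d}{dv}H_m(v/\sqrt T)=\frac{2m}{\sqrt T}H_{m-1}(v/\sqrt T)$ yields exactly the shifted relation. The boundary terms vanish because $g\,\omega^{1/2}\in L^2$ and $g' \omega^{1/2}\in L^2$, which implies $g\,p(v)\to 0$ along a sequence $|v|\to\infty$ for any polynomial $p$. Alternatively, one can argue first on the dense subspace $V^\omega$, where all sums are finite, and then extend by density.

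Iterating the recurrence $r$ times gives, for $n\ge 0$,
\[
\widehat{(\partial_v^r g)}_{n+r} = (-1)^r \Big(\tfrac{2}{T}\Big)^{r/2}\sqrt{\tfrac{(n+r)!}{n!}}\;\widehat g_n .
\]
Hence, by Parseval,
\[
\sum_{n>N}|\widehat g_n|^2=\Big(\tfrac{T}{2}\Big)^{r}\sum_{n>N}\frac{n!}{(n+r)!}\,\big|\widehat{(\partial_v^r g)}_{n+r}\big|^2
\le \Big(\tfrac{T}{2}\Big)^{r}\frac{(N+1)!}{(N+1+r)!}\,|g|^2_{H^r(\omega dv)} .
\]
This uses the monotonicity of $n!/(n+r)!$ in $n$. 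Since
\[
\frac{(N+1)!}{(N+1+r)!}\le (N+1)^{-r}\le N^{-r},
\]
we obtain
\[
\|g-\mathcal{P}g\|_{L^2(\omega dv)}\le 2^{-r/2}\,(N/T)^{-r/2}\,|g|_{H^r(\omega dv)} ,
\]
which implies the claim with $C=2^{-r/2}\le 1$ and the seminorm bounded by the full norm. The case $r=0$ is trivial since $\|g-\mathcal{P}g\|\le\|g\|$. For $N=0$ the right-hand side is interpreted as infinite, or one restricts to $N\ge1$.

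The only delicate step is the rigorous justification of the coefficient relation for general $g\in H^r(\omega dv)$: that the integration by parts produces no boundary terms, and that $\partial_v^k g\in L^2(\omega dv)$ allows repeated application. I would handle it by the integration-by-parts argument against the polynomial $\psi_m\omega$ described above. The decay of $g\,\omega\,p$ at infinity follows since
\[
(g\,\omega^{1/2})' = g'\omega^{1/2}+\tfrac{v}{T}\,g\,\omega^{1/2}.
\]
The second term is not obviously in $L^2$, so I would instead use truncation with a cutoff $\chi(v/R)$ and pass $R\to\infty$ by dominated convergence. This works because $g\,\omega\,p$ and $g'\omega\,p$ are integrable: they equal $(g\omega^{1/2})(p\,\omega^{1/2})$, with $p\,\omega^{1/2}=p\,e^{v^2/(2T)}$.

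That last product is not integrable, and this reveals a genuine gap. I would therefore fall back on density: prove the identity for $g\in V^\omega$, where it is algebraic, and extend to $H^r(\omega dv)$ assuming $V^\omega$ is dense in it. This density is the main obstacle I would verify or cite from the Hermite-approximation literature.
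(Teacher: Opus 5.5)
Your proof is correct, and it takes a different route from the paper. The paper gives no argument of its own: it simply invokes Theorem~2.1 of Fok--Guo--Tang. As the paper's later appeal to their Eq.~(2.21) shows, that proof works with the second-order Sturm--Liouville operator $\mathcal{A}$. It writes $\widehat g_n=(2n/T)^{-r/2}(\mathcal{A}^{r/2}g,\psi_n)_{\omega dv}$ and then bounds $\|\mathcal{A}^{r/2}g\|_{L^2(\omega dv)}$ by $\|g\|_{H^r(\omega dv)}$.

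You instead use the first-order raising identity $\psi_n'=-\sqrt{2(n+1)/T}\,\psi_{n+1}$. This identity is special to the asymmetric weighting: $\partial_v$ maps $V_N^\omega$ into $V_{N+1}^\omega$, so differentiation commutes with truncation up to an index shift. It makes your argument self-contained and explicit:
\begin{itemize}
\item you get $C=2^{-r/2}$;
\item only the seminorm $|g|_{H^r(\omega dv)}$ appears on the right;
\item odd $r$ needs no fractional powers of $\mathcal{A}$;
\item the identity $|g|^2_{H^k(\omega dv)}=(2/T)^k\sum_n \frac{(n+k)!}{n!}|\widehat g_n|^2$ gives the $H^s$ estimate of the next theorem with essentially no extra work.
\end{itemize}
The cited route, in exchange, uses only the Sturm--Liouville eigenstructure. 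It therefore does not depend on the derivative acting as a pure shift, which fails for the symmetrically weighted functions.

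The ``genuine gap'' in your last paragraph does not exist; it comes from counting the weight twice. You integrate by parts against $p:=\psi_m\omega=c_mH_m(v/\sqrt T)$, so the integrands are $g'p$, $gp'$ and $gp$, with no additional factor $\omega$. Now $gp=(g\,\omega^{1/2})(p\,\omega^{-1/2})$ with $p\,\omega^{-1/2}=p\,e^{-v^2/(2T)}\in L^2$, and the same holds for $g'p$ and $gp'$. Cauchy--Schwarz then gives $gp,\ g'p,\ gp'\in L^1(\mathbb{R})$. Hence $(gp)'\in L^1$, so $gp$ has finite limits at $\pm\infty$, and these must vanish since $gp\in L^1$. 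Thus $\int g'p\,dv=-\int gp'\,dv$ for every $g\in H^1(\omega dv)$, and your first justification, via boundary terms vanishing along a sequence, was already complete.

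Applying this to $\partial_v^k g\in H^1(\omega dv)$ for $k\le r-1$ justifies the iteration. Completeness of $\{\psi_n\}$ in $L^2(\omega dv)$, i.e.\ of the standard Hermite functions $\psi_n\omega^{1/2}$ in $L^2(\mathbb{R})$, gives the Parseval identity you start from. The density fallback you flag as the main obstacle is therefore not needed.
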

\begin{proof}
The result follows directly from the approximation estimate 
established in \cite[Theorem 2.1]{fok2002combined}. 
\end{proof}
\begin{Theorem}
$\forall g \in H^r(\omega dv)$ and $0 \leq s \leq r$,
\begin{equation} 
|| g - \mathcal{P} g ||_{H^{s}(\omega dv)} 
\leq C \left( \dfrac{N}{T} \right)^{\frac{s-r}{2}} 
|| g ||_{H^r(\omega dv)}. 
\end{equation}
\end{Theorem}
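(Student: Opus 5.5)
The plan is to work directly in the Hermite coefficients, using the derivative identity $\frac{d\psi_n}{dv} = -\sqrt{2(n+1)/T}\,\psi_{n+1}$ together with the orthonormality $(\psi_n,\psi_m)_{\omega dv}=\delta_{nm}$. For $g=\sum_n \widehat g_n\psi_n \in H^r(\omega dv)$, differentiating term by term $k$ times gives
\[
\frac{d^k g}{dv^k} = (-1)^k\sum_{n\ge 0} \widehat g_n \Bigl(\tfrac{2}{T}\Bigr)^{k/2}\sqrt{\tfrac{(n+k)!}{n!}}\;\psi_{n+k}.
\]
Hence
\[
|g|_{H^k(\omega dv)}^2=\sum_{n\ge0}\Bigl(\tfrac{2}{T}\Bigr)^{k}\tfrac{(n+k)!}{n!}\,|\widehat g_n|^2 .
\]
This identity is the backbone of the argument. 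Its justification requires showing that the termwise differentiated series converges in $L^2(\omega dv)$ and represents $g^{(k)}$, which follows because $g^{(k)}\in L^2(\omega dv)$ has coefficients $\widehat{(g^{(k)})}_{m}=(-1)^k\sqrt{(2/T)^k m!/(m-k)!}\,\widehat g_{m-k}$. To see this, integrate by parts against $\psi_m\omega$ and use that $\frac{d}{dv}(\psi_m\omega)$ is a multiple of $\psi_{m-1}\omega$, which is the adjoint form of the derivative relation.

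Next, since $g-\mathcal{P}g=\sum_{n>N}\widehat g_n\psi_n$, the same formula gives, for each integer $0\le k\le s$,
\[
|g-\mathcal Pg|_{H^k(\omega dv)}^2=\sum_{n>N}\Bigl(\tfrac{2}{T}\Bigr)^{k}\tfrac{(n+k)!}{n!}|\widehat g_n|^2 .
\]
I will compare each term with the corresponding term in $|g|_{H^r(\omega dv)}^2$. The ratio is
\[
\Bigl(\tfrac{2}{T}\Bigr)^{k-r}\frac{(n+k)!}{(n+r)!}.
\]
For $n>N$ this ratio is bounded by $C\,(n/T)^{k-r}\le C\,(N/T)^{k-r}\le C\,(N/T)^{s-r}$. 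Two facts are used here: $(n+r)!/(n+k)!\ge n^{r-k}$, and $N/T$ is bounded below for the relevant range, so the powers with $k\le s$ are dominated by the power $k=s$. Summing over $k=0,\dots,s$ then yields
\[
\|g-\mathcal Pg\|_{H^s(\omega dv)}^2\le C (N/T)^{s-r}|g|_{H^r(\omega dv)}^2,
\]
which is stronger than the claim. The case $s=0$ recovers the previous theorem, which serves as a consistency check.

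The main obstacle is the regime where $N/T$ is small. In that case $(N/T)^{k-r}$ for $k<s$ exceeds $(N/T)^{s-r}$, and the step "dominated by the $k=s$ term" fails. I would handle this by noting that the constant $C$ may depend on $r$ and $T$, or by assuming $N\ge T$. Otherwise one bounds the lower-order terms trivially by $\|g\|_{H^r}$ and absorbs them, which is acceptable for $N/T\ge 1$. If $s$ is non-integer, I would interpolate between the integer cases $\lfloor s\rfloor$ and $\lceil s\rceil$, or equivalently define the $H^s$ norm through the eigenvalues $\lambda_n = 2n/T$ of $\mathcal A$. In that formulation the same coefficient comparison applies directly, with weights $(1+\lambda_n)^s$.
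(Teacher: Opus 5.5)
Your proof is correct, and it takes a different, self-contained route. The paper gives no argument of its own and simply cites \cite[Theorem~2.2]{fok2002combined}. You instead use the fact that, for the asymmetric weight, $d/dv$ acts on Hermite coefficients as a pure weighted shift (adjointly, $(\psi_m\omega)'=\sqrt{2m/T}\,\psi_{m-1}\omega$). This gives the exact Parseval identity $|g|_{H^k(\omega dv)}^2=\sum_n (2/T)^k\tfrac{(n+k)!}{n!}|\widehat g_n|^2$. The boundary terms you leave implicit vanish because $g\,p\in W^{1,1}(\mathbb{R})$ for every polynomial $p$ when $g\in H^1(\omega dv)$. After that, the estimate is a termwise comparison with explicit constants, and it needs none of the fractional powers of $\mathcal{A}$ that the cited framework relies on (as in the neighbouring $L^2(dv)$ proof).

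There is one slip in your last paragraph: the trivial bound handles the \emph{opposite} regime from the one you name. Your identity shows that truncation decreases every seminorm, so $\|g-\mathcal{P}g\|_{H^s(\omega dv)}\le\|g\|_{H^r(\omega dv)}$. When $N\le T$ one has $(N/T)^{(s-r)/2}\ge 1$, so this already proves the claim with $C=1$. For $N\ge T$ your comparison gives $C^2\le 2^{s+1-r}$. Splitting at $N=T$ therefore yields a constant depending only on $r$ and $s$, not on $T$, which is better than your fallback of a $T$-dependent $C$. This works only with the full norm on the right. The seminorm bound you call ``stronger'' does need $N\gtrsim T$ or a $T$-dependent constant once $s\ge 1$ (take $g=\psi_{N+1}$ and let $T\to\infty$). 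Finally, since the paper defines $H^r(\omega dv)$ only for integer $r$, the interpolation remark is unnecessary.
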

\begin{proof}
The result follows directly from the approximation estimate 
established in \cite[Theorem 2.2]{fok2002combined}. 
\end{proof}

\subsection{Non-orthogonal projection}
$\forall g, h \in V^\omega$, since $(g,h)_{\omega dv} < \infty$ implies $(g,h)_{dv} < \infty$, 
it is possible to consider the projection $\mathcal{P}$ 
with the standard $L^2(\mathbb{R})$ inner product
\begin{equation}\label{eqn:hermite_non_orthogonal_projection}
(g-\mathcal{P}g, \phi)_{dv} = 0, \quad \phi \in V_N^\omega.
\end{equation}
Since the projection operator $\mathcal{P}$ is not orthogonal 
in terms of the inner product \eqref{eqn:inner_product_L2}, we have 
\begin{equation}
\begin{aligned}
   P_b 
&= \left( I_{N+1} \ 0\right), \\
   P_p 
&= 
   \left(\left( \psi_i,\psi_j \right)_{dv}\right)^{-1}_{(N+1)\times(N+1)}
   \, P_b \,
   \left(\left( \psi_i,\psi_j \right)_{dv}\right)_{\infty\times\infty} \\
&= 
   \left(\left( \psi_i,\psi_j \right)_{dv}\right)^{-1}_{(N+1)\times(N+1)}
   \left(\left( \psi_i,\psi_j \right)_{dv}\right)_{(N+1)\times\infty}.
\end{aligned}
\end{equation}
Equivalently,
\begin{equation}\label{eqn:non_orthogonal_projection_expansion}
  \mathcal{P} g(v) 
= \sum_{n=0}^N \sum_{m=0}^\infty \left( P_p\right)_{nm} \widehat{g}_m \psi_n(v).
\end{equation}

We introduce the following definition which is yield by the non-orthogonal projection.
\begin{defi}[Gram matrix]
The infinite symmetric Gram matrix
$A = A^{\top} = (a_{mn})_{m,n \geq 0} \in \mathbb{R}^{\mathbb{N} \times \mathbb{N}}$ of
the problem is the collection of all scalar products of the AW functions. 
The coefficient is
$$
a_{mn} = \int_{\mathbb{R}} \psi_m(v) \psi_n(v) dv.
$$
\end{defi}

\begin{Theorem}\label{theo:hermite_non_ortho_projection_error}
$\forall g \in H^r(\omega dv)$ and $r \geq 0$, 
\begin{equation}
|| g - \mathcal{P} g ||_{L^2(dv)}
\leq C \left( \dfrac{N}{T} \right)^{-\frac{r}{2}} 
|| g ||_{H^r(\omega dv)}. 
\end{equation}
\end{Theorem}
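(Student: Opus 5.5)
The non-orthogonal projection defined by \eqref{eqn:hermite_non_orthogonal_projection} is the orthogonal projection of $L^2(dv)$ onto $V_N^\omega$. The natural plan is therefore a best-approximation argument followed by a comparison of the unweighted and weighted norms. No analysis of the Gram matrix $A$ or of $P_p$ should be needed for this estimate.

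\emph{Step 1 (best approximation in $L^2(dv)$).} For every $\phi\in V_N^\omega$, the Galerkin orthogonality $(g-\mathcal{P}g,\phi)_{dv}=0$ gives the Pythagorean identity
\[
\|g-\phi\|_{L^2(dv)}^2=\|g-\mathcal{P}g\|_{L^2(dv)}^2+\|\mathcal{P}g-\phi\|_{L^2(dv)}^2 .
\]
Here we use $\mathcal{P}g-\phi\in V_N^\omega$. Hence $\|g-\mathcal{P}g\|_{L^2(dv)}\le \|g-\phi\|_{L^2(dv)}$ for all $\phi\in V_N^\omega$. The projection is well defined because the Gram block $A_{11}=((\psi_i,\psi_j)_{dv})_{i,j\le N}$ is symmetric positive definite, the $\psi_i$ being linearly independent in $L^2(dv)$.

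\emph{Step 2 (choice of comparison function).} Take $\phi$ to be the weighted orthogonal projection of $g$ from the previous subsection, which I denote $\mathcal{P}^\omega g=\sum_{n\le N}\widehat g_n\psi_n$ to distinguish it from $\mathcal{P}$. Then
\[
\|g-\mathcal{P}g\|_{L^2(dv)}\le \|g-\mathcal{P}^\omega g\|_{L^2(dv)} .
\]

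\emph{Step 3 (norm comparison).} Since $\omega(v)=e^{v^2/T}\ge 1$, we have $\|h\|_{L^2(dv)}\le \|h\|_{L^2(\omega dv)}$ for every $h$. Applying this with $h=g-\mathcal{P}^\omega g$ and then invoking the first theorem of this section (the weighted estimate from \cite[Theorem 2.1]{fok2002combined}) gives
\[
\|g-\mathcal{P}g\|_{L^2(dv)}\le \|g-\mathcal{P}^\omega g\|_{L^2(\omega dv)}\le C\Big(\frac{N}{T}\Big)^{-r/2}\|g\|_{H^r(\omega dv)} .
\]
This is the claimed bound.

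\emph{Main obstacle.} The only delicate point is justifying Step 1 rigorously on $V^\omega$, i.e.\ ensuring $g\in L^2(dv)$ so that all the inner products are finite. This follows from $\|g\|_{L^2(dv)}\le \|g\|_{L^2(\omega dv)}\le \|g\|_{H^r(\omega dv)}<\infty$, which also matches the standing assumption that $(g,h)_{\omega dv}<\infty$ implies $(g,h)_{dv}<\infty$. If one insisted instead on working with the explicit representation \eqref{eqn:non_orthogonal_projection_expansion}, one would have to bound $\|P_p\|$ uniformly in $N$; the best-approximation argument avoids this entirely.
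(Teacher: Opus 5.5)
Your proof is correct. It follows the same chain of inequalities as the paper, written without coordinates.

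\begin{itemize}
\item The paper expands $g-\mathcal{P}g$ in the $\psi_n$ and obtains the exact identity $\|g-\mathcal{P}g\|_{L^2(dv)}^2=\widehat G_2^\top(A_{22}-A_{21}A_{11}^{-1}A_{12})\widehat G_2$.
\item It then uses the Schur-complement bound $A_{22}-A_{21}A_{11}^{-1}A_{12}\le A_{22}$. Since $\widehat G_2^\top A_{22}\widehat G_2=\|g-\mathcal{P}^\omega g\|_{L^2(dv)}^2$, this is exactly your best-approximation step with $\phi=\mathcal{P}^\omega g$.
\item Next it uses $A_{22}\le I$, which is your $\omega\ge 1$ comparison.
\item Finally it applies the tail-coefficient decay \cite[Eq.~(2.21)]{fok2002combined}, which is the content of the weighted theorem you invoke.
\end{itemize}

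Your version is shorter and works verbatim for any weight $\omega\ge 1$. It never has to justify the infinite products defining $P_p\widehat G$. It also sidesteps two small slips in the paper's matrix step: the norms there should be squared, and the bound on $X_2^\top A_{22}X_2$ should be applied to the tail $\sum_{n>N}x_n\psi_n$ (i.e.\ $X_1=0$), not to the full $h$. The paper's version, in exchange, expresses the error exactly through the Gram matrix. This links the estimate to block quantities such as $A_{11}^{-1}A_{12}$, which are analysed in Section~\ref{sec:efficient_form}.
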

\begin{proof}
It follows from the Hermite expansion \eqref{eqn:non_orthogonal_projection_expansion}, 
for any $g \in H_\omega^r(\mathbb{R})$, 
that
\[
\begin{aligned}
  g - \mathcal{P} g 
&=\sum_{n=0}^{\infty} \widehat{g}_n \psi_n(v) 
 -\sum_{n=0}^{N} 
  \sum_{m=0}^{\infty} (P_p)_{nm} \widehat{g}_m 
  \psi_n(v) \\ 
&=\sum_{n=N+1}^{\infty} \widehat{g}_n \psi_n(v) 
 +\sum_{n=0}^{N} \left( 
  \widehat{g}_n
 -\sum_{m=0}^{\infty} (P_p)_{nm} \widehat{g}_m 
  \right) 
  \psi_n(v) \\ 
\end{aligned}
\]
The infinite vector of $\widehat{g}_n$ is denoted as
$\widehat{G} = (\widehat{g}_n)_{n \geq 0} \in \mathbb{R}^{\mathbb{N}}$. 
Then we take the block vector by truncation $N$ of $\widehat{G}$, 
and the block matrix by truncation $N$ of $A$.
Considering its $L^2$ norm, we have
\begin{equation}\label{eq:hermite-approximation-equation}
  || g - \mathcal{P} g ||_{L^2(dv)}^2
= \widehat{G}_2^{\top} 
  \left( A_{22} - A_{21} A_{11}^{-1} A_{12} \right) 
  \widehat{G}_2. 
\end{equation}
Since $A$ and $A_{11}$ are symmetric positive definite, 
the Schur complement satisfies
\[
A_{22} - A_{21} A_{11}^{-1} A_{12} \ge 0. 
\]
We also have 
\[
0 \le A_{22} - A_{21} A_{11}^{-1} A_{12} \le A_{22}. 
\]
Indeed, $A_{21} A_{11}^{-1} A_{12}$ is positive semidefinite.
Let $X$ be a finitely supported sequence, and define
\[
h(v) = \sum_{n=0}^\infty x_n \psi_n(v).
\]
Take its tail sequence $X_2=(x_{N+1},x_{N+2},\dots)$.
Then
\[
  X_2^\top A_{22} X_2 
\le 
  \left\| \sum_{n=0}^\infty x_n \psi_n(v) \right\|_{L^2(dv)}
\le 
  \left\| \sum_{n=0}^\infty x_n \psi_n(v) \right\|_{L^2(\omega dv)}
= X^\top X. 
\]
Thus
\[
  X_2^\top 
  \left( A_{22} - A_{21} A_{11}^{-1} A_{12} \right) 
  X_2 
\le X^\top X. 
\]
Using \eqref{eq:hermite-approximation-equation}, we have
\[
\begin{aligned}
  || g - \mathcal{P} g ||_{L^2(dv)}^2
&\le \sum_{n=N+1}^\infty |\widehat{g}_n|^2 \\
&\le \, C \left(\dfrac{N+1}{T}\right)^{-r}
     \sum_{n=N+1}^{\infty} 
     \left| \int_{\mathbb{R}} 
     \mathcal{A}^{\frac{r}{2}} g(v) \psi_n(v) \omega(v) dv \right|^2, \\
\end{aligned}
\]
where
\[
  |\widehat{g}_n|
= \left(\dfrac{2n}{T}\right)^{-\frac{r}{2}}
  \left| \int_{\mathbb{R}} \mathcal{A}^{\frac{r}{2}} g(v) \psi_n(v) \omega(v) dv \right|,
  \quad n > N
\]
as given in \cite[Eq.~(2.21)]{fok2002combined}.
Finally,
\[
|| g - \mathcal{P} g ||_{L^2(dv)} 
\leq C \left( \dfrac{N}{T} \right)^{-\frac{r}{2}} 
|| g ||_{H^r(\omega dv)}. 
\]
\end{proof}

\section{Fourier approximations}\label{sec:fourier-approximations}

We consider the Fourier space
\[
F = \operatorname{span}\{ e_n \}_{n \in \mathbb{Z}}.
\]
For periodic functions, we can equivalently express them in their complete form
\[
  g(x) 
= \sum_{n \in \mathbb{Z}} 
  c_n \left( \frac{1}{\sqrt{|\Omega_x|}} 
  \exp\left(\imath \frac{2\pi n x}{|\Omega_x|}\right) \right), 
  \quad x \in \Omega_x.
\]
We define periodic Sobolev spaces $H^r_{\text{per}}(dx)$, norms, and seminomes in the same way as $H^r(\omega dv)$ spaces.
In addition, 
the $L^2$ weighted space given by
\[
W := F \otimes V^{\omega},
\]
with the associate inner product
\[
  \left( g, h \right)_{\omega dvdx} 
= \int_{\Omega_x} \int_{\mathbb{R}} g(x,v) h(x,v) \omega dvdx,
\]
and $||\cdot||_{L^2(\omega dvdx)}$ the corresponding norm.

Next, we define the orthogonal projection $\mathcal{Q} : F \to F_J$ 
that is a mapping such that \( \forall g \in F \), 
\[
\left(g - \mathcal{Q} g, \phi \right)_{dx} = 0, \quad \forall \phi \in F_J,  
\]
where the inner product on $F$ is given by
\[
  \left( g, h \right)_{dx} 
= \int_{\Omega_x} g(x) \,\overline{h(x)} \,dx, \quad g, h \in F.
\]
It follows that $\mathcal{Q}$ is the projection on the finite-dimensional space
\[
F_J = \operatorname{span}\{ e_n \}_{n = -J,\dots,J}
\]
which consists of trigonometric polynomials of degree less than 
or equal to $J$ and of period $|\Omega_x|$.
More over, we have 
\[
\dim \left( F_J \right) = 2J+1.
\]

We now recall a classical result concerning the Fourier approximation in the context of spatial discretization.
\begin{proposition}\label{prop:fourier_projection_error}
Let $s>0$ be a integer. For all $g \in H_{\text{per}}^s(dx)$, we have
\begin{equation}
\left|\left| g - \mathcal{Q} g \right|\right|_{L^2(dx)} \leq C J^{-s} \left| g \right|_{H_{\text{per}}^s(dx)}, 
\quad J > 0, 
\quad C = \left( \frac{|\Omega_x|}{2\pi} \right)^{s}.
\end{equation}
\end{proposition}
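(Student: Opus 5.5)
We argue via Parseval's identity for the orthonormal Fourier basis and the way differentiation acts on Fourier coefficients. Write $g(x)=\sum_{n\in\mathbb{Z}} c_n e_n(x)$ with $e_n(x)=|\Omega_x|^{-1/2}\exp(\imath 2\pi n x/|\Omega_x|)$. The family $\{e_n\}$ is orthonormal for $(\cdot,\cdot)_{dx}$, so the orthogonal projection $\mathcal{Q}$ onto $F_J$ is the truncation $\mathcal{Q}g=\sum_{|n|\le J} c_n e_n$. Parseval then gives
\[
\left\| g-\mathcal{Q}g \right\|_{L^2(dx)}^2=\sum_{|n|>J}|c_n|^2 .
\]

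Next we express the seminorm $|g|_{H^s_{\text{per}}(dx)}=\|\partial_x^s g\|_{L^2(dx)}$ in terms of the $c_n$. For $g\in H^s_{\text{per}}(dx)$, integration by parts on the period cell is legitimate: the boundary terms cancel by periodicity, which is exactly where the definition of $H^s_{\text{per}}$ is used. It shows that the Fourier coefficients of $\partial_x^s g$ are $(\imath 2\pi n/|\Omega_x|)^s c_n$. A second application of Parseval yields
\[
|g|_{H^s_{\text{per}}(dx)}^2=\sum_{n\in\mathbb{Z}}\left(\frac{2\pi |n|}{|\Omega_x|}\right)^{2s}|c_n|^2 .
\]

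For the tail, $|n|>J$ implies $|n|\ge J$ (indeed $|n|\ge J+1$), so $1\le (|n|/J)^{2s}$. Therefore
\[
\sum_{|n|>J}|c_n|^2\le J^{-2s}\sum_{|n|>J}|n|^{2s}|c_n|^2\le J^{-2s}\left(\frac{|\Omega_x|}{2\pi}\right)^{2s}|g|_{H^s_{\text{per}}(dx)}^2 .
\]
Taking square roots gives the claim with $C=(|\Omega_x|/(2\pi))^s$.

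The only step needing care is the identification of the Fourier coefficients of the weak derivative $\partial_x^s g$. It holds because periodic test functions $e_n$ may be used in the definition of weak derivatives on the torus. The remaining steps are direct consequences of Parseval.
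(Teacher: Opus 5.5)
Your proof is correct. The paper gives no argument of its own here and simply recalls the result as classical; your Parseval-plus-tail-bound argument is the standard proof behind that citation, reproduces exactly the stated constant $C=(|\Omega_x|/(2\pi))^{s}$, and rightly flags periodicity as what kills the boundary terms when identifying the Fourier coefficients of $\partial_x^s g$.
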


\section{Hermite/Fourier spectral method}\label{sec:hermite-fourier-spectral-method}

In this section, we introduce the Hermite/Fourier spectral method. 
First, we address the velocity discretization by representing 
the approximation of the distribution function $f$ with AW Hermite functions. 
Next, we handle the spatial discretization using the Fourier functions.


For convenience, let 
$ B = (b_{mn})_{m,n\geq 0} \in\mathbb{R}^{\mathbb{N}\times\mathbb{N}}$ 
denote the infinite transport matrix, defined by
\[
b_{mn} = 
\left\{
\begin{aligned}
&  \sqrt{m}, && m - n = 1,\\
&  \sqrt{n}, && m - n =-1,\\
&  0, && \text{otherwise}.\\
\end{aligned}
\right.
\]
Thus, $B$ is symmetric and tridiagonal. 
Similarly, let 
$D=(d_{mn})_{m,n\geq 0}\in\mathbb{R}^{\mathbb{N}\times\mathbb{N}}$ 
denote the infinite acceleration matrix, with entries
\[
d_{mn} =
\left\{
\begin{aligned}
&  \sqrt{m}, && m - n = 1,\\
&  0, && \text{otherwise}.\\
\end{aligned}
\right.
\]
Hence, $D$ is strictly lower triangular and 
has nonzero entries only on its first subdiagonal.

\subsection{Hermite spectral discretization}

We look for an approximation $f_N$ of the VP system \eqref{eqn:vlasov_poisson_system} 
as a finite sum which corresponds to a truncation of a series
\[
f_N(t,x,v) = \sum_{n=0}^{N} u_n(t,x)\psi_n(v).
\]
The unknowns $(u_n)_{0\leq n \leq N}$ are computed 
using the standard $L^2(\mathbb{R})$ inner product 
and taking $(\psi_n(v))_{0\leq n \leq N}$ as test functions in \eqref{eqn:vlasov_poisson_system}.
Therefore, we obtain a system of evolution equations for 
the unknowns $(u_n)_{0\leq n \leq N}$ :
\begin{equation}\label{eqn:approximation_fN}
\begin{aligned}
   \sum_{m=0}^{N} a_{nm} \partial_t u_m
+& \sqrt{\frac{T}2}
   \sum_{m=0}^{N}
   \left( \sqrt{m}  a_{n,m-1}
	+ \sqrt{m+1}a_{n,m+1}
   \right) \partial_x u_m \\
-& \sqrt{ \frac{2}T} E_N
   \sum_{m=0}^{N} \sqrt{m+1} a_{n,m+1} u_m
=0.
\end{aligned}
\end{equation}
The system above can also be written as
\[
  \left\langle A \partial_t U, V \right\rangle
+ \sqrt{\dfrac{T}{2}} \left\langle A B \partial_x U, V \right\rangle
- \sqrt{\dfrac{2}{T}} E_N \left\langle A D U, V \right\rangle
= 0,
\]
where 
\[
U,V \in \,\operatorname{supp}(V_N^{\omega})
:= \left\{ (u_m) \in \mathbb{R}^{\mathbb{N}} \,\big|\, 
u_m = 0 \text{ for } m > N \right\}.
\]
The initial data $u_n(t=0)$ is given by
\[
  \left(f_N(t=0), \psi_n\right)_{L^2(dv)}
= \left(f_0, \psi_n\right)_{L^2(dv)}.
\]
Meanwhile, we observe that the density $\rho_N$ satisfies 
\[
\rho_N = \int_{\mathbb{R}} f_N dv = (\pi T)^{\frac14} u_0
\]
and then the Poisson equation becomes
\[
\dfrac{\partial E_N}{\partial x}  = (\pi T)^{\frac14} u_0 - \rho_{0,N},
\]
with $\rho_{0,N}$ such that 
\[
\int_{\Omega_x} \left( \rho_N - \rho_{0,N} \right) dx = 0.
\]

\subsection{Fourier spectral discretization}

We consider the Fourier spectral discretization for the Vlasov equation 
with AW Hermite functions in velocity.
We consider the subspaces $F_J$ and $V_N^{\omega}$, with the parameter 
$\tau = (1/J, 1/N)$ related to the numerical discretization in space and velocity.
We look for an approximation $u_{\tau,n}(t,\cdot) \in F_J$, such that, 
for any $\varphi_n \in F_J$, we have
\begin{equation}\label{eqn:approximation_ftau}
\begin{aligned}
 & \int_{\Omega_x} \varphi_n \sum_{m=0}^{N} a_{nm} \partial_t u_{\tau,m} dx \\
+& \int_{\Omega_x} 
   \sqrt{\frac{T}2}
   \varphi_n 
   \sum_{m=0}^{N}
   \left( \sqrt{m}  a_{n,m-1}
	+ \sqrt{m+1}a_{n,m+1}
   \right) \partial_x u_{\tau,m} dx \\
-& \int_{\Omega_x}
   \sqrt{ \frac{2}T} E_{\tau}
   \varphi_n 
   \sum_{m=0}^{N} \sqrt{m+1} a_{n,m+1} u_{\tau,m} dx
=0,
\quad n = 0,\dots,N.
\end{aligned}
\end{equation}
The system above can also be written as
\begin{equation}\label{eqn:approximation_ftau_matrix}
\begin{aligned}
   \int_{\Omega_x} \left\langle A \partial_t U_{\tau}, \Phi \right\rangle dx
+ &\int_{\Omega_x} \sqrt{\dfrac{T}{2}} \left\langle A B \partial_x U_{\tau}, \Phi \right\rangle dx \\
- &\int_{\Omega_x} \sqrt{\dfrac{2}{T}} E_{\tau} \left\langle A D U_{\tau}, \Phi \right\rangle dx 
= 0,
\quad U_{\tau},\Phi \in \,\operatorname{supp}(F_J).
\end{aligned}
\end{equation}
Therefore, the approximation solution of the VP system \eqref{eqn:vlasov_poisson_system} 
obtained using the AW Hermite expansion in velocity and the Fourier expansion in space is defined by
\begin{equation}\label{eqn:expansion_ftau}
f_{\tau}(t,x,v) = \sum_{n=0}^N u_{\tau,n}(t,x) \psi_n(v),
\end{equation}
where $\tau = (1/J, 1/N)$ is a parameter, 
$(u_{\tau,n})_{0\leq n \leq N}$ satisfy \eqref{eqn:approximation_ftau} 
and $(\psi_n(v))_{0\leq n \leq N}$ are AW Hermite functions.

We now deal with the approximation $E_{\tau}$ of the electric field. 
We consider the potential function $\Psi_{\tau}$, such that 
\[
\left\{
\begin{aligned}
& E_\tau = -\frac{\partial \Psi_\tau}{\partial x} \\
& \dfrac{\partial E_\tau}{\partial x} = u_{\tau,0} - \rho_{0,\tau}.
\end{aligned}
\right.
\]
Hence we get the one dimensional Poisson equation 
\[
-\dfrac{\partial^2 \Psi_\tau}{\partial x^2} = u_{\tau,0} - \rho_{0,\tau},
\]
with $\rho_{0,\tau}$ such that 
\[
\int_{\Omega_x} (u_{\tau,0} - \rho_{0,\tau}) dx = 0.
\]

\section{Convergence}\label{sec:convergence}

In this section, we establish the convergence of the numerical solution $f_\tau$ given by
\eqref{eqn:approximation_ftau} and \eqref{eqn:expansion_ftau} 
to the solution $f$ to the VP system \eqref{eqn:vlasov_poisson_system},
which consists of a projection error and a consistency error.

\subsection{Global projection error}

First, we provide a global projection error on the combination of 
AW Hermite approximations and Fourier approximations in the suitable functional space.

We introduce $W_\tau$, the subspace definde by
\[
W_\tau := V_N^{\omega} \otimes F_J.
\]
For $f \in V^{\omega}$, it can be expanded as 
\[
f(\cdot,x,v) = \sum_{n \in \mathbb{N}} \widehat{u}_n(x) \psi_n(v).
\]
Then the projection $\mathcal{S}_\tau$ is given by 
\[
\mathcal{S}_\tau f = \sum_{n=0}^N \mathcal{Q} \,\widehat{u}_n(x) \psi_n(v).
\]
\begin{Theorem}\label{theo:global_projection_error}
For all $f \in H_{\text{per}}^r (\omega dvdx)$, 
we have the global projection error
\[
\left|\left| f - \mathcal{S}_\tau f \right|\right|_{L^2(dxdv)} 
\leq C \left( \left( \dfrac{N}{T} \right)^{-\frac{r}{2}} + J^{-s} \right) 
\left|\left| f \right|\right|_{H_{\text{per}}^r (\omega dvdx)}. 
\]
\end{Theorem}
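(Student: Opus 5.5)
The plan is to split the error with the triangle inequality into a velocity part and a spatial part. Write $\mathcal{P}_v$ for the Hermite truncation in $v$, i.e. the cut-off $\mathcal{P}_v f=\sum_{n=0}^N \widehat{u}_n(x)\psi_n(v)$, and $\mathcal{Q}$ for the Fourier projection acting in $x$. Then $\mathcal{S}_\tau f=\mathcal{Q}\mathcal{P}_v f=\mathcal{P}_v\mathcal{Q} f$, since the two operators act on different variables and commute. We write
\[
f-\mathcal{S}_\tau f=(f-\mathcal{P}_v f)+(\mathcal{P}_v f-\mathcal{Q}\mathcal{P}_v f),
\]
and bound each term in $L^2(dxdv)$.

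\textbf{Velocity part.} For a.e.\ fixed $x$, apply the argument of Theorem~\ref{theo:hermite_non_ortho_projection_error} in its simplest form. Since $\omega\ge 1$, we have $\|h\|_{L^2(dv)}\le\|h\|_{L^2(\omega dv)}$. For the cut-off $\mathcal{P}_v$ this gives
\[
\|f(x,\cdot)-\mathcal{P}_vf(x,\cdot)\|_{L^2(dv)}^2\le\sum_{n>N}|\widehat u_n(x)|^2\le C\left(\frac NT\right)^{-r}\|f(x,\cdot)\|^2_{H^r(\omega dv)},
\]
using the coefficient decay $|\widehat u_n|=(2n/T)^{-r/2}|(\mathcal{A}^{r/2}f,\psi_n)_{\omega dv}|$ from \cite{fok2002combined}. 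Integrating over $\Omega_x$ bounds this term by $C(N/T)^{-r/2}\|f\|_{H^r_{\text{per}}(\omega dvdx)}$, with the mixed norm understood to control $\int_{\Omega_x}\|f(x,\cdot)\|^2_{H^r(\omega dv)}dx$.

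\textbf{Spatial part.} Again use $L^2(dv)\le L^2(\omega dv)$ and the $\omega$-orthonormality of the $\psi_n$:
\[
\|\mathcal{P}_vf-\mathcal{Q}\mathcal{P}_vf\|^2_{L^2(dxdv)}\le\sum_{n=0}^N\|\widehat u_n-\mathcal{Q}\widehat u_n\|^2_{L^2(dx)}.
\]
Apply Proposition~\ref{prop:fourier_projection_error} to each $\widehat u_n$. This gives the bound $C J^{-2s}\sum_{n}|\widehat u_n|^2_{H^s_{\text{per}}(dx)}=CJ^{-2s}\int_{\mathbb{R}}|\partial_x^s f|^2\omega\,dvdx$, by Parseval in the $\omega$-weighted velocity variable, since $\partial_x$ commutes with the Hermite expansion. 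This last quantity is controlled by $\|f\|^2_{H^r_{\text{per}}(\omega dvdx)}$ provided $s\le r$, with $s$ the spatial regularity index of that norm.

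Adding the two bounds gives the stated estimate. The only real subtlety is bookkeeping: the norm $H^r_{\text{per}}(\omega dvdx)$ is not precisely defined in the paper, so it must be interpreted as containing both $v$-derivatives up to order $r$ and $x$-derivatives up to order $s$ in $L^2(\omega dvdx)$. The expected main obstacle is justifying the interchange of $\mathcal{Q}$ and $\partial_x^s$ with the Hermite series, which requires termwise convergence in $L^2(\omega dv)$. This holds for $f$ in the stated space by Fubini together with Parseval. Everything else reduces to the one-dimensional results already proved.
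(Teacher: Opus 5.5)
Your proposal is correct and follows the paper's proof essentially step for step: the same splitting $f-\mathcal{S}_\tau f=(f-\mathcal{P}f)+(\mathcal{I}-\mathcal{Q})\mathcal{P}f$, a Hermite tail bound for the first term, and $L^2(\omega dv)$-orthonormality with Proposition~\ref{prop:fourier_projection_error} applied mode by mode for the second. The only difference is that the paper cites Theorem~\ref{theo:hermite_non_ortho_projection_error} (the $L^2(dv)$ projection) for the velocity term, whereas you argue directly with the cut-off via $\omega\ge 1$; your version is the one consistent with $\mathcal{S}_\tau=\mathcal{Q}\mathcal{P}$, and it rests on the same tail estimate $\sum_{n>N}|\widehat u_n|^2$ that drives that theorem's proof.
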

\begin{proof}
We have, for all $f \in L^2(\omega dvdx)$,
\[
  f - \mathcal{S}_\tau f 
= \left( f - \mathcal{P} f \right) + 
  \left( \mathcal{P} f - \mathcal{S}_\tau f \right).
\]
On the one hand, observing the first term of the right-hand-side, 
we can estimate it thanks to 
Theorem \ref{theo:hermite_non_ortho_projection_error}
\[
|| f - \mathcal{P} f ||_{L^2(dvdx)} 
\leq C \left( \dfrac{N}{T} \right)^{-\frac{r}{2}} 
|| f ||_{H_{\text{per}}^r (\omega dvdx)}. 
\]
For the second term, 
applying Cauchy-Schwarz inequality, and 
Proposition \ref{prop:fourier_projection_error}, we have
\[
\begin{aligned}
   \left|\left| \mathcal{P} f - \mathcal{Q}\,\mathcal{P} f \right|\right|_{L^2(dvdx)}^2 
&= \left|\left| \left(\mathcal{I} - \mathcal{Q}\right)\mathcal{P} f \right|\right|_{L^2(dvdx)}^2 \\ 
&\leq   \left|\left| \sum_{n=0}^{N} 
        \left| \left(\mathcal{I} - \mathcal{Q}\right) \widehat{u}_n \psi_n \right| 
	\right|\right|_{L^2(\omega dvdx)}^2 \\ 
&=      \sum_{n=0}^{N} \left|\left| 
        \left(\mathcal{I} - \mathcal{Q}\right) \widehat{u}_n
	\right|\right|_{L^2(dx)}^2 \\ 
&\leq   \sum_{n=0}^{\infty} \left( C J^{-s} \left| \widehat{u}_n \right|_{H_{\text{per}}^s(dx)} \right)^2 \\
&= \left( C J^{-s} \right)^2 \left|\left| f \right|\right|_{H_{\text{per}}^s(dx)}^2.
\end{aligned}
\]
Gathering the latter results and using $r \geq s$, we establish that 
there exists a constant $C > 0$, independent of the discretization parameter $\tau = \left( 1/J, 1/N \right)$, 
such that
\[
\left|\left| f - \mathcal{S}_\tau f \right|\right|_{L^2(dxdv)} 
\leq C \left( \left( \dfrac{N}{T} \right)^{-\frac{r}{2}} + J^{-s} \right) 
\left|\left| f \right|\right|_{H_{\text{per}}^r (\omega dvdx)}. 
\]
\end{proof}

\subsection{Consistency error}

In the following, we estimate the consistency error.
\begin{proposition}\label{prop:consistency_error}
For the numerical solution $f_\tau$ given by
\eqref{eqn:approximation_ftau} and \eqref{eqn:expansion_ftau}, 
and the solution $f$ to the VP system \eqref{eqn:vlasov_poisson_system},
for $t \geq 0$, the consistency error satisfies
\[
\left|\left| \mathcal{S}_\tau f(t) - f_\tau(t)\right|\right|_{L^2(dvdx)}
\leq 
  C 
  \left(
  \left( \dfrac{N}{T} \right)^{-\frac{r-1}{2}} + J^{-s}
  \right).
\]
\end{proposition}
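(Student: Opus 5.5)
The plan is a standard energy (Gronwall) argument for the difference $e_\tau := \mathcal{S}_\tau f - f_\tau \in W_\tau$, measured in the unweighted norm. The key tool is the $L^2(dv)$-stability of the Galerkin formulation \eqref{eqn:approximation_ftau_matrix}.

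\textbf{Error equation.} Write $e_\tau = \sum_{n=0}^N \epsilon_n(t,x)\psi_n(v)$ with coefficient vector $\mathcal{E}$. Since $f$ solves \eqref{eqn:vlasov_poisson_system}, testing the Vlasov equation against $\varphi\,\psi_n$ ($\varphi\in F_J$, $n\le N$) in $L^2(dxdv)$ gives an identity that $f$ satisfies exactly. Subtracting \eqref{eqn:approximation_ftau} yields, for every $\Phi\in\operatorname{supp}(F_J)$,
\[
\int_{\Omega_x}\Big\langle A\partial_t\mathcal{E} + \sqrt{\tfrac{T}{2}}AB\partial_x\mathcal{E} - \sqrt{\tfrac{2}{T}}E_\tau AD\mathcal{E},\Phi\Big\rangle dx = \int_{\Omega_x}\int_{\mathbb{R}} R_\tau\, \Phi\cdot\Psi\, dv\,dx ,
\]
where $\Psi=(\psi_0,\dots,\psi_N)^\top$. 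The residual splits as $R_\tau = R_1+R_2+R_3$:
\begin{itemize}
\item $R_1 = \partial_t(\mathcal{S}_\tau f - f) + v\,\partial_x(\mathcal{S}_\tau f - f)$, the projection residual of transport;
\item $R_2 = E\,\partial_v(\mathcal{S}_\tau f - f)$;
\item $R_3 = (E - E_\tau)\,\partial_v f_\tau$, written as $(E-E_\tau)\partial_v\mathcal{S}_\tau f - (E-E_\tau)\partial_v e_\tau$.
\end{itemize}

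\textbf{Energy estimate.} Take $\Phi=\mathcal{E}$, which is legitimate because the Galerkin test space equals the trial space. The first term gives $\frac12\frac{d}{dt}\|e_\tau\|_{L^2(dxdv)}^2$, since $\langle A\mathcal{E},\mathcal{E}\rangle=\|e_\tau\|_{L^2(dv)}^2$. The transport term equals $\int\int v\,\partial_x e_\tau\, e_\tau = 0$ by periodicity. The field term equals $\int E_\tau\int \partial_v e_\tau\, e_\tau\,dv\,dx=0$, because $e_\tau$ decays in $v$. This is exactly the $L^2$ stability that the Galerkin choice buys, and the same cancellation removes the $-(E-E_\tau)\partial_v e_\tau$ piece of $R_3$. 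We are therefore left with
\[
\tfrac12\tfrac{d}{dt}\|e_\tau\|^2 \le \big(\|\Pi R_1\|+\|\Pi R_2\|+\|(E-E_\tau)\partial_v\mathcal{S}_\tau f\|\big)\|e_\tau\|,
\]
where $\Pi$ is the $L^2(dv)$ projection onto $V_N^\omega\otimes F_J$.

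\textbf{Bounding the residuals.} The time-derivative part of $R_1$ is the projection error of $\partial_t f$, and it drops out entirely since $\partial_t$ commutes with $\mathcal{S}_\tau$ and $(\partial_t f - \mathcal{S}_\tau\partial_t f, \psi_n\varphi)$ is controlled by Theorem~\ref{theo:global_projection_error}. The remaining pieces involve $v$ and $\partial_v$ applied to the tail $f-\mathcal{S}_\tau f$. Using the recurrences for $v\psi_n$ and $\psi_n'$, each costs a factor $\sqrt{N/T}$ on the Hermite coefficients. Combining this with Theorem~\ref{theo:hermite_non_ortho_projection_error} and Proposition~\ref{prop:fourier_projection_error} gives the bound $C((N/T)^{-(r-1)/2}+J^{-s})$, with $C$ depending on $\|f\|$ in $H^r_{per}(\omega dvdx)$ (including $\partial_x f$ and $\partial_t f$) and on $\|E\|_{L^\infty}$.

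For the field term, Poisson's equation in one dimension gives $\|E-E_\tau\|_{L^\infty}\le C\|\rho-\rho_\tau\|_{L^2(dx)}$. The density difference splits into a projection part of order $(N/T)^{-r/2}+J^{-s}$ plus $(\pi T)^{1/4}\epsilon_0$, and $\|\epsilon_0\|$ is bounded by $\|e_\tau\|$ via the smallest eigenvalue of $A_{11}$, or more simply through $\int e_\tau\,dv$ and Cauchy–Schwarz with weight. Multiplying by $\|\partial_v\mathcal{S}_\tau f\|_{L^2}\le C\|f\|_{H^1(\omega)}$ gives a term bounded by $C(\|e_\tau\| + \text{proj})$.

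\textbf{Main obstacle and conclusion.} The obstacle I expect is this last step. Controlling $\epsilon_0$, and more generally passing between $\|\mathcal{E}\|$ and $\|e_\tau\|_{L^2(dv)}$, requires a lower bound on the Gram matrix $A_{11}$ that may degrade with $N$. I would avoid it by writing $\rho-\rho_\tau=\int (f-f_\tau)dv$ and estimating $\int e_\tau dv$ directly, using the exponential decay of both functions. Failing that, I would assume an a priori bound on $\|f_\tau\|$ obtained via a bootstrap. The same issue appears in $\Pi R$, since $\Pi$ is non-orthogonal in the coefficient basis but orthogonal in $L^2(dv)$, so its norm is $\le1$ there. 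Finally, Gronwall on $[0,t]$, together with the initial error $e_\tau(0)=\mathcal{S}_\tau f_0 - f_\tau(0)$, completes the proof. The initial error is bounded by the difference between the $L^2(dv)$ projection and $\mathcal{S}_\tau$, both of which are controlled by Theorem~\ref{theo:global_projection_error}.
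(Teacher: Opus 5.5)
Your route is the paper's: an energy identity for $e_\tau=\mathcal S_\tau f-f_\tau$ in the unweighted norm, exact cancellation of the transport and field terms (the paper's $J_1=0$), residuals bounded by projection errors with one derivative lost, then Gronwall. The step that fails is the one you flag yourself, and it is not a technicality. To close Gronwall you need $\|E-E_\tau\|\le C(\|e_\tau\|_{L^2(dvdx)}+\text{projection error})$ with $C$ independent of $N$. That amounts to $\|\int_{\mathbb R} e_\tau\,dv\|_{L^2(dx)}\le C\|e_\tau\|_{L^2(dvdx)}$, which is false.

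On $V_N^\omega$ the sharp constant in $|\int g\,dv|\le C_N\|g\|_{L^2(dv)}$ is $C_N^2=(\pi T)^{1/2}(A_{11}^{-1})_{00}$. Since $L$ is lower triangular, Proposition~\ref{prop:cholesky_decomposition} gives $A_{11}=L_{11}\Lambda_{11}L_{11}^\top$, so $A_{11}^{-1}=Z_{11}\Lambda_{11}^{-1}Z_{11}^\top$. Hence
\[
(A_{11}^{-1})_{00}=\sqrt2\sum_{0\le 2j\le N}\binom{2j}{j}4^{-j}\sim 2\sqrt{N/\pi},
\qquad C_N\sim (4TN)^{1/4}.
\]
This growth is unavoidable: $g\mapsto\int g\,dv$ is unbounded on $L^2(\mathbb R)$, while $\bigcup_N V_N^\omega$ is dense. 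Going through $\lambda_{\min}(A_{11})\le 2^{-N-1/2}$ is far worse. Cauchy--Schwarz with weight produces $|\mathcal E|=\|e_\tau\|_{L^2(\omega dv)}$, which the Galerkin identity does not control; it controls only $\mathcal E^\top A_{11}\mathcal E$.

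Your fallbacks do not repair this. Elements of $V_N^\omega$ do not decay uniformly in $N$: they spread over $|v|\lesssim\sqrt{NT}$, which is exactly where the $N^{1/4}$ comes from. An a priori bound on $\|f_\tau\|_{L^2}$ neither controls $\int f_\tau\,dv$ nor yields a term linear in $\|e_\tau\|$. Feeding $C_N$ into Gronwall gives a factor $e^{CN^{1/4}t}$, which swamps $(N/T)^{-(r-1)/2}$. What is missing is an $N$-uniform control of the discrete density error, for instance from an a priori weighted bound on $f_\tau$ or a separate estimate of $\rho-\rho_\tau$.

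You will not find this ingredient in the paper either. Its bound $|J_2|\le C\|\mathcal S_\tau f-f_\tau\|\,\|f-f_\tau\|$ is imported from the weighted analysis it cites, where $|\int g\,dv|\le(\pi T)^{1/4}\|g\|_{L^2(\omega dv)}$ makes it immediate. In $L^2(dvdx)$ it is precisely the step above.

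Elsewhere you are more careful than the paper. You keep the Hermite-tail part of $v\partial_x(f-\mathcal S_\tau f)$, which the paper's claim $J_{3,3}=0$ overlooks: $\partial_x\widehat U_2$ has modes in $F_J$, and the dense $A$ pairs it with $\Delta_\tau$. You also account for $e_\tau(0)$.

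Three small corrections:
\begin{itemize}
\item The $\partial_t$ part of $R_1$ does not drop out, because the tail of $\partial_t f$ pairs with $\psi_n$, $n\le N$, through $a_{nm}\neq0$. The bound you cite still covers it.
\item ``Each costs a factor $\sqrt{N/T}$'' should be replaced by the tail estimate $\sum_{n>N}n\|\widehat u_n\|^2\lesssim (N/T)^{-(r-1)}$, since $\sqrt n$ is unbounded on the tail.
\item Your $R_3$ mixes two bookkeepings. With $E_\tau\partial_v e_\tau$ on the left, the residual is simply $(E_\tau-E)\partial_v\mathcal S_\tau f$. This is harmless for the estimates.
\end{itemize}
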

\begin{proof}
For convenience, we give the following notations. 
To begin with, $U_{\tau,1} = (u_{\tau,n})_{0\leq n \leq N}$ corresponds to $f_\tau$ satisfying the 
discrete system \eqref{eqn:approximation_ftau}. 
In addition, 
$\widehat{U}_1 = (\widehat{u}_n)_{0\leq n \leq N}$, 
$\widehat{U}_2 = (\widehat{u}_n)_{n > N}$ correspond to the exact continuous solution $f$ 
of the VP system \eqref{eqn:vlasov_poisson_system}.
Next,
we consider the projection of the exact continuous solution $f$
\[
\mathcal{S}_\tau f = \sum_{n=0}^N \mathcal{Q} \widehat{u}_n \psi_n,
\]
and let $\widetilde{U}_1$ denote $(\mathcal{Q} \widehat{u}_n)_{0\leq n \leq N}$.

From the notations above, we introduce $(\Delta_{\tau,n})_{n\in \mathbb{N}}$ 
and $(\Gamma_{\tau,n})_{n\in \mathbb{N}}$ as
\[
\Delta_\tau = 
\begin{pmatrix}
\widetilde{U}_1 - U_{\tau,1} \\
0
\end{pmatrix}
\]
and
\[
\Gamma_\tau = 
\begin{pmatrix}
\widehat{U}_1 - \widetilde{U}_1 \\
\widehat{U}_2
\end{pmatrix}.
\]
Since $\Delta_\tau \in V_N^{\omega} \otimes F_J$, it satisfies the discrete system \eqref{eqn:approximation_ftau}. 
Therefore, we have
\[
\int_{\Omega_x}
\Big\langle A \left( 
\partial_t U_\tau + B \partial_x U_\tau - E_\tau D U_\tau
\right), \Delta_\tau \Big\rangle 
\,dx = 0.
\]
We also have
\[
  \int_{\Omega_x}
  \,\Big\langle
  \partial_t \widehat{U} + B \partial_x \widehat{U} - E D \widehat{U}, 
  A \Delta_\tau \Big\rangle 
  \,dx
= 0.
\]
Substracting two systems above, we have 
\[
\begin{aligned}
 &\int_{\Omega_x}
  \,\Big\langle
  \partial_t \Delta_\tau + B \partial_x \Delta_\tau - E_\tau D \Delta_\tau, 
  A \Delta_\tau \Big\rangle 
  \,dx\\
+&\int_{\Omega_x}
  \,\Big\langle
  \partial_t \Gamma_\tau + B \partial_x \Gamma_\tau - E_\tau D \Gamma_\tau 
+ \left( E_\tau - E \right) D \widehat{U}, 
  A \Delta_\tau \Big\rangle 
  \,dx\\
=&\,0.
\end{aligned}
\]
We aim to estimate the consistency error by 
\[
  \left|\left| \mathcal{S}_\tau f(t) - f_\tau(t)\right|\right|_{L^2(dvdx)}^2 
= \int_{\Omega_x} \Big\langle A \Delta_\tau, \Delta_\tau \Big\rangle \,dx.
\]
We compute the time derivative of the equality above, given by
\[
  \dfrac{1}{2} \dfrac{\mathrm{d}}{\mathrm{d t}}
  \left|\left| \mathcal{S}_\tau f(t) - f_\tau(t)\right|\right|_{L^2(dvdx)}^2 
= \int_{\Omega_x} \Big\langle A \partial_t \Delta_\tau, \Delta_\tau \Big\rangle \,dx.
\]
Hence we obtain
\begin{equation}\label{eqn:time_derivative_consistency_error}
  \dfrac{1}{2} \dfrac{\mathrm{d}}{\mathrm{d t}}
  \left|\left| \mathcal{S}_\tau f(t) - f_\tau(t)\right|\right|_{L^2(dvdx)}^2 
= J_1 + J_2 + J_{3,1} + J_{3,2} + J_{3,3},
\end{equation}
where
\[
\begin{aligned}
   J_1 
&= \int_{\Omega_x} \Big\langle
   - A B \partial_x \Delta_\tau + E_\tau A D \Delta_\tau, 
   \Delta_\tau \Big\rangle \,dx, \\
   J_2 
&= \int_{\Omega_x} \Big\langle
   - (E_\tau-E) D \widehat{U}, 
   A \Delta_\tau \Big\rangle \,dx, \\
   J_{3,1} 
&= \int_{\Omega_x} \Big\langle
   - \partial_t \Gamma_\tau, 
   A \Delta_\tau \Big\rangle \,dx, \\
   J_{3,2} 
&= \int_{\Omega_x} \Big\langle
   - E_\tau D \Gamma_\tau, 
   A \Delta_\tau \Big\rangle \,dx, \\
   J_{3,3} 
&= \int_{\Omega_x} \Big\langle
   - B \partial_x \Gamma_\tau, 
   A \Delta_\tau \Big\rangle \,dx. \\
\end{aligned}
\]
Let us estimate these terms one by one. 
On the one hand, we estimate $J_1$, 
which we refer to as \textit{the leading term} of the consistency error,
since it is associated with the vector $\Delta_\tau$.
It is straightforward to verify that both the transport and 
acceleration terms vanish. Indeed, we have
\[
  \int_{\Omega_x} \Big\langle
  A B \partial_x \Delta_\tau, 
  \Delta_\tau \Big\rangle \,dx 
= \dfrac{1}{2}
  \int_{\Omega_x} \partial_x \Big\langle
  A B \Delta_\tau, 
  \Delta_\tau \Big\rangle \,dx = 0, 
\]
and
\[
  \int_{\Omega_x} \Big\langle
  E_\tau A D \Delta_\tau, 
  \Delta_\tau \Big\rangle \,dx
= \dfrac{1}{2}
  \int_{\Omega_x} \Big\langle
  E_\tau (A D + D^\top A)\Delta_\tau, 
  \Delta_\tau \Big\rangle \,dx
= 0.
\]
We get $J_1 = 0$.

On the other hand, we estimate $J_2, J_{3,1}, J_{3,2}, J_{3,3}$, 
which we refer to as \textit{the projection term} of the consistency error,
since they are associated with the vectors $\Gamma_\tau$ and $\widehat{U}$.
As the results presented here are closely similar to 
those established in \cite[Section 4]{bessemoulin2023convergence}, 
we provide only a concise outline of the proofs.
Firstly, We obtain an estimate on $J_2$ as
\[
|J_2| \leq C 
\left| \left| \mathcal{S}_\tau f -f_\tau \right|\right|_{L^2(dvdx)}
\left| \left| f -f_\tau \right|\right|_{L^2(dvdx)}.
\]
Because the total error decomposes into the sum of the projection error and the consistency error, 
it follows by an application of the triangle inequality 
combined with Theorem \ref{theo:hermite_non_ortho_projection_error} that
\[
|J_2| 
  \leq C 
  \left| \left| \mathcal{S}_\tau f -f_\tau \right|\right|_{L^2(dvdx)}
  \left(
  \left( \dfrac{N}{T} \right)^{-\frac{r}{2}} + J^{-s}
+ \left| \left| \mathcal{S}_\tau f -f_\tau \right|\right|_{L^2(dvdx)}
  \right).
\]
Finally, an application of Young's inequality yields
\[
|J_2| 
  \leq C 
  \left(
  \left| \left| \mathcal{S}_\tau f -f_\tau \right|\right|_{L^2(dvdx)}^2
+ \left(
  \left( \dfrac{N}{T} \right)^{-\frac{r}{2}} + J^{-s}
  \right)^2
  \right).
\]
Next,
we remark that
\[
\int_{\Omega_x} 
\Big\langle
  \partial_t \Gamma_\tau, 
A \partial_t \Gamma_\tau 
\Big\rangle \,dx
\le
\|\partial_t f - \mathcal{S}_\tau \partial_t f \|_{L^2(dv dx)}^2.
\]
Since $f(t) \in H^r(\omega dv dx)$ satisfies 
the Vlasov system~\eqref{eqn:vlasov_poisson_system}, 
and using the second estimate of~\cite[Lemma 2.2]{fok2002combined}, 
we have
\[
\begin{aligned}
  \| \partial_t f\|_{H^{r-1}(dv dx)}
&\le
  \|v\partial_x f\|_{H^{r-1}(dv dx)}
+ \| \partial_v f\|_{H^{r-1}(dv dx)} \|E\|_{L^\infty} \\ 
&\le
  \dfrac{C}{T}\|f\|_{H^{ r }(dv dx)}.
\end{aligned}
\]
Thus, using the Cauchy-Schwarz inequality to $J_{3,1}$ and 
Theorem \ref{theo:global_projection_error} to $\partial_t f$, 
we have
\[
|J_{3,1}| 
  \leq C 
  \left| \left| \mathcal{S}_\tau f -f_\tau \right|\right|_{L^2(dvdx)}
  \left(
  \left( \dfrac{N}{T} \right)^{-\frac{r-1}{2}} + J^{-s}
  \right).
\]
In the same manner, an application of Young’s inequality leads to
\[
|J_{3,1}| 
  \leq C 
  \left(
  \left| \left| \mathcal{S}_\tau f -f_\tau \right|\right|_{L^2(dvdx)}^2
+ \left(
  \left( \dfrac{N}{T} \right)^{-\frac{r-1}{2}} + J^{-s}
  \right)^2
  \right).
\]
Thirdly, we estimate $|J_{3,2}|$. 
The estimate on $|J_{3,2}|$ follows the same arguements 
as the estimate for $|J_{3,1}|$. Indeed, remarking again that
\[
\int_{\Omega_x} 
\Big\langle
  D \Gamma_\tau, 
A D \Gamma_\tau 
\Big\rangle \,dx
\le
\|\partial_v f - \mathcal{S}_\tau \partial_v f \|_{L^2(dv dx)}^2,
\]
and using the Cauchy-Schwarz inequality for $|J_{3,2}|$ and 
Theorem \ref{theo:global_projection_error} to $\partial_v f$, 
we have
\[
|J_{3,2}| 
  \leq C 
  \left| \left| \mathcal{S}_\tau f -f_\tau \right|\right|_{L^2(dvdx)}
  \left(
  \left( \dfrac{N}{T} \right)^{-\frac{r-1}{2}} + J^{-s}
  \right).
\]
Likewise, applying Young's inequality yields
\[
|J_{3,2}| 
  \leq C 
  \left(
  \left| \left| \mathcal{S}_\tau f -f_\tau \right|\right|_{L^2(dvdx)}^2
+ \left(
  \left( \dfrac{N}{T} \right)^{-\frac{r-1}{2}} + J^{-s}
  \right)^2
  \right).
\]
For the last term, we have $|J_{3,3}| = 0$,
because every component of $A\Delta_\tau$ lies in $F_J$, 
while $\partial_x \Gamma_\tau$ contains only Fourier modes outside $F_J$.
Finally, 
collecting all the estimates in \eqref{eqn:time_derivative_consistency_error}, 
this yields
\[
\begin{aligned}
& \dfrac{1}{2} \dfrac{\mathrm{d t}}{\mathrm{d}}
  \left|\left| \mathcal{S}_\tau f(t) - f_\tau(t)\right|\right|_{L^2(dvdx)}^2 \\ 
& \leq C 
  \left(
  \left| \left| \mathcal{S}_\tau f -f_\tau \right|\right|_{L^2(dvdx)}^2
+ \left(
  \left( \dfrac{N}{T} \right)^{-\frac{r-1}{2}} + J^{-s}
  \right)^2
  \right).
\end{aligned}
\]
We conclude using Gronwall's lemma that, $\forall t \geq 0$, we have
\[
\left|\left| \mathcal{S}_\tau f(t) - f_\tau(t)\right|\right|_{L^2(dvdx)}^2 
\leq 
  \int_0^t C 
  \left(
  \left( \dfrac{N}{T} \right)^{-\frac{r-1}{2}} + J^{-s}
  \right)^2 ds 
  \operatorname{exp}\left(\int_0^t C ds \right),
\]
which completes the proof.
\end{proof}

\subsection{Final result}

From the preivous estimates, we are now ready to establish the final result on the 
convergence of the numerical solution $f_\tau$ given by
\eqref{eqn:approximation_ftau} and \eqref{eqn:expansion_ftau}, 
and the solution $f$ to the VP system \eqref{eqn:vlasov_poisson_system},
for $t \geq 0$. 
\begin{Theorem}
$\forall t \geq 0$, let $f(t,\cdot) \in H^{r}(\omega dvdx)$ be the solution of 
the VP system \eqref{eqn:vlasov_poisson_system} where $r \geq s$ and 
$f_\tau$ be the approximation of \eqref{eqn:approximation_ftau} and \eqref{eqn:expansion_ftau}.
Then there exists a constant $C > 0$, 
independent of $\tau = \left( 1/J, 1/N \right)$, such that
\[
\left|\left| f(t) - f_\tau(t)\right|\right|_{L^2(dvdx)}
\leq 
  C 
  \left(
  \left( \dfrac{N}{T} \right)^{-\frac{r-1}{2}} + J^{-s}
  \right).
\]
\end{Theorem}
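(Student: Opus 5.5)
The plan is to split the total error into a projection part and a consistency part by the triangle inequality:
\[
\left\| f(t) - f_\tau(t) \right\|_{L^2(dvdx)}
\le
\left\| f(t) - \mathcal{S}_\tau f(t) \right\|_{L^2(dvdx)}
+ \left\| \mathcal{S}_\tau f(t) - f_\tau(t) \right\|_{L^2(dvdx)} .
\]
The first term is controlled pointwise in time by Theorem \ref{theo:global_projection_error}. It gives $C\big((N/T)^{-r/2} + J^{-s}\big)\|f(t)\|_{H^r_{\text{per}}(\omega dvdx)}$. The second term is exactly the quantity bounded in Proposition \ref{prop:consistency_error}, namely $C\big((N/T)^{-(r-1)/2} + J^{-s}\big)$.

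To combine the two, I use that $N/T \ge 1$ for the relevant range of $N$ (otherwise I enlarge the constant). Then $(N/T)^{-r/2} \le (N/T)^{-(r-1)/2}$, so the projection error is absorbed into the slower consistency rate. The hypothesis $r \ge s$ is what makes Theorem \ref{theo:global_projection_error} applicable with the same Sobolev index in $x$ and $v$, so I invoke it with that normalisation.

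The only point needing care is uniformity of the constant in $\tau$, and this is the step I expect to be the main obstacle. Two sources must be checked.

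First, the factor $\|f(t)\|_{H^r_{\text{per}}(\omega dvdx)}$ in the projection term must be bounded by a constant depending only on the exact solution. On a finite time interval $[0,t]$ this holds by the regularity assumption on $f$: I take $\sup_{s\le t}\|f(s)\|_{H^r}$.

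Second, the constant from Proposition \ref{prop:consistency_error} comes from Gronwall, of the form $C t\, e^{Ct}$. Its $C$ depends on $\|E\|_{L^\infty}$, on $\|E_\tau\|_{L^\infty}$, and on norms of $f$ and $\partial_t f$, but not on $N$ or $J$. The quantity $\|E_\tau\|_{L^\infty}$ is the one needing an argument. I would obtain it from the one-dimensional Poisson equation:
\[
\|E_\tau\|_{L^\infty} \le C\,\|u_{\tau,0}\|_{L^1(dx)} \le C\,\|f_\tau\|_{L^2(dvdx)} .
\]
I would then bound $\|f_\tau\|_{L^2(dvdx)}$ uniformly in $\tau$ using the $L^2$ stability of the scheme. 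That stability is the same mechanism that made $J_1=0$: the cancellations of $AB$ and of $AD + D^\top A$ imply $\frac{d}{dt}\langle A U_\tau, U_\tau\rangle = 0$, so $\|f_\tau\|_{L^2}$ is conserved.

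With these uniform bounds the constant is independent of $\tau$, although it depends on $t$. Summing the two estimates then yields the stated rate $C\big((N/T)^{-(r-1)/2} + J^{-s}\big)$.
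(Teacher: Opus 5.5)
Your skeleton is the paper's proof: the triangle inequality, then Theorem~\ref{theo:global_projection_error} and Proposition~\ref{prop:consistency_error}, with $(N/T)^{-r/2}$ absorbed into $(N/T)^{-(r-1)/2}$. If you take the proposition as stated, you are done at that point. The problem is the extra argument you give for the $\tau$-uniformity of the constant, which the paper does not address.

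The step $\|u_{\tau,0}\|_{L^1(dx)}\le C\|f_\tau\|_{L^2(dvdx)}$ is false with $C$ independent of $N$, because the unweighted $L^2(dv)$ norm does not control the density. For $g=\sum_{n\le N}u_n\psi_n$ you have $\int_{\mathbb{R}}g\,dv=(\pi T)^{1/4}u_0$ and $\|g\|_{L^2(dv)}^2=U^\top A_{11}U$. So the sharp constant is $(\pi T)^{1/4}\big((A_{11}^{-1})_{00}\big)^{1/2}$. By Proposition~\ref{prop:cholesky_decomposition} and $Z_{11}L_{11}^\top=I_{N+1}$, $A_{11}^{-1}=Z_{11}\Lambda_{11}^{-1}Z_{11}^\top$, hence
\[
(A_{11}^{-1})_{00}=\sum_{\substack{k\le N\\ k\ \mathrm{even}}}\frac{z_{0k}^2}{\lambda_k}=\sqrt{2}\sum_{j=0}^{\lfloor N/2\rfloor}4^{-j}\binom{2j}{j}\asymp\sqrt{N}.
\]
Concretely, $e^{-v^2/T}\sum_{k\le N/2}(v^2/T)^k/k!\in V_N^\omega$ is close to $1$ on $|v|\lesssim\sqrt{NT/2}$: its $L^2$ norm is small compared with its integral. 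Your conservation identity $\frac{d}{dt}\int_{\Omega_x}\langle AU_\tau,U_\tau\rangle\,dx=0$ is correct. But it only gives $\|E_\tau\|_{L^\infty}\le CN^{1/4}$. Feeding this into $J_{3,2}$ costs a factor $N^{1/4}$ in the final estimate; the $J^{-s}$ part becomes $N^{1/4}J^{-s}$, which is not small unless $J$ and $N$ are coupled.

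The same obstruction sits in a term your list of dependencies leaves out. The bound on $J_2$ needs $\|E_\tau-E\|$ controlled by $\|f-f_\tau\|_{L^2(dvdx)}$, i.e.\ $\int(f_\tau-f)\,dv$ controlled by the unweighted error. But $E_\tau-E$ is driven by $(\Delta_\tau)_0$, and $\|(\Delta_\tau)_0\|_{L^2(dx)}$ is only bounded by $CN^{1/4}\|\mathcal{S}_\tau f-f_\tau\|_{L^2(dvdx)}$. This puts a term $CN^{1/4}\|\mathcal{S}_\tau f-f_\tau\|_{L^2(dvdx)}^2$ into the energy inequality, so Gronwall gives a factor $e^{CN^{1/4}t}$. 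No bootstrap on $\|E_\tau\|_{L^\infty}$ repairs that. A genuinely $\tau$-independent constant needs an ingredient that controls the zeroth moment of $f_\tau$ and of the error, for instance a weighted norm, where $|\int g\,dv|\le(\pi T)^{1/4}\|g\|_{L^2(\omega dv)}$. The $L^2(dv)$ stability of the scheme alone does not supply it.
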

\begin{proof}
By the triangle inequality, we have
\[
\left|\left|
f(t) - f_\tau(t)
\right|\right|_{L^2(dvdx)}
\leq
  || f - \mathcal{S}_\tau f ||_{L^2(dvdx)}
+ \left|\left| \mathcal{S}_\tau f(t) - f_\tau(t)\right|\right|_{L^2(dvdx)},
\]
where the first term on the right-hand side is the global projection error 
and the second one is the consistency error.
According to Theorem \ref{theo:global_projection_error}
and Proposition \ref{prop:consistency_error}, we complete the proof.
\end{proof}

\section{Efficient form}\label{sec:efficient_form}

Since the Gram matrix arising from the Galerkin method is dense, 
its direct use can incur substantial computational costs.
To address this issue, 
we derive an equivalent formulation 
that preserves the sparsity structure of the resulting evolution matrices.
Before presenting this equivalent formulation, 
we analyze the structure of the Gram matrix.

\subsection{Gram matrix}\label{appendix:gram_matrix}
\begin{lemma}\label{lemma:entries_of_gram_matrix}
The entries of the infinite Gram matrix $A$ are as follows:
\[
\left\{
\begin{aligned}
&  a_{nm}
=  (-1)^{\frac{n-m}{2}} 
   2^{-(n+m)-\frac{1}{2}}
   \dfrac{(n+m)!}{(\frac{n+m}{2})! \sqrt{n!m!}},  
&& n+m \in 2\mathbb{N}, \\
&  a_{nm} = 0,
&& \text{otherwise}.\\
\end{aligned}
\right.
\]
\end{lemma}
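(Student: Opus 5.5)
The plan is to reduce the entries to a classical Hermite integral by rescaling, and then to evaluate that integral with the Hermite generating function. This avoids table lookups and produces the sign, the parity rule and the constant all at once.

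\textbf{Step 1 (rescaling).} Substituting $v=\sqrt{T}\,y$ in the definition of $a_{nm}$, and using $\sigma(v)^2=e^{-2v^2/T}=e^{-2y^2}$, I get
\[
a_{nm}=\frac{\sqrt{T}}{\sqrt{\pi T}\,\sqrt{2^{n+m}n!\,m!}}\int_{\mathbb{R}} H_n(y)H_m(y)\,e^{-2y^2}\,dy
=\frac{1}{\sqrt{\pi}\,\sqrt{2^{n+m}n!\,m!}}\;I_{nm}.
\]
Here $I_{nm}:=\int_{\mathbb{R}} H_nH_m e^{-2y^2}dy$. In particular, $a_{nm}$ does not depend on $T$.

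\textbf{Step 2 (parity).} $H_n$ has the parity of $n$, so when $n+m$ is odd the integrand is odd. Hence $I_{nm}=0$, which gives the ``otherwise'' case of the lemma.

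\textbf{Step 3 (generating function).} I will use $\sum_{n\ge0}H_n(y)s^n/n!=e^{2ys-s^2}$. Multiplying the two generating functions and integrating against $e^{-2y^2}$ is a Gaussian integral:
\[
\sum_{n,m}\frac{s^nt^m}{n!\,m!}I_{nm}
=\int_{\mathbb{R}} e^{-2y^2+2y(s+t)-s^2-t^2}\,dy
=\sqrt{\tfrac{\pi}{2}}\,e^{\frac{(s+t)^2}{2}-s^2-t^2}
=\sqrt{\tfrac{\pi}{2}}\,e^{-\frac{(s-t)^2}{2}}.
\]
Interchanging sum and integral is justified by absolute convergence for fixed $s,t$. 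Next I expand
\[
e^{-(s-t)^2/2}=\sum_j \frac{(-1)^j (s-t)^{2j}}{2^j\,j!}.
\]
For $2j=n+m$, the coefficient of $s^nt^m$ in $(s-t)^{2j}$ is $\binom{2j}{n}(-1)^m$. Matching coefficients gives
\[
I_{nm}=\sqrt{\tfrac{\pi}{2}}\,(-1)^{j+m}\frac{(n+m)!}{2^{j}\,j!},\qquad j=\tfrac{n+m}{2}.
\]
Since $j+m=\tfrac{n-m}{2}+2m$, the sign is $(-1)^{(n-m)/2}$.

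\textbf{Step 4 (assembly).} Inserting this into Step 1, the factors combine as $\pi^{-1/2}\sqrt{\pi/2}=2^{-1/2}$ and $2^{-(n+m)/2}\cdot2^{-(n+m)/2}=2^{-(n+m)}$. This yields
\[
a_{nm}=(-1)^{\frac{n-m}{2}}\,2^{-(n+m)-\frac12}\,\frac{(n+m)!}{\left(\frac{n+m}{2}\right)!\,\sqrt{n!\,m!}},
\]
which is the stated formula.

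The only step needing care is the coefficient extraction and sign bookkeeping in Step 3; everything else is routine. As a sanity check I will verify $a_{00}=2^{-1/2}$ directly: $\int\psi_0^2\,dv=(\pi T)^{-1/2}\sqrt{\pi T/2}=2^{-1/2}$.
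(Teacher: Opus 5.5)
Your proof is correct, but it does not follow the paper's route: the paper gives no computation of its own and only defers to \cite[Theorem 4.1]{dai2024quadratic}. Your argument is self-contained. The rescaling to $I_{nm}=\int_{\mathbb{R}} H_nH_m e^{-2y^2}\,dy$ makes the $T$-independence of $A$ explicit, and parity disposes of the zero entries. The generating-function identity $\sum_{n,m}\frac{s^nt^m}{n!\,m!}I_{nm}=\sqrt{\pi/2}\,e^{-(s-t)^2/2}$ then produces the sign, the parity rule and the constant in one stroke. I checked that the powers of $2$, the factor $\pi^{-1/2}\sqrt{\pi/2}=2^{-1/2}$ and the sign $(-1)^{j+m}=(-1)^{(n-m)/2}$ all come out exactly as stated.

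The one place to tighten is the interchange of sum and integral. From the explicit expansion of $H_n$ one has $\sum_n |H_n(y)|\,|s|^n/n!\le e^{2|y||s|+s^2}$. Hence the double series is dominated by $e^{-2y^2+2|y|(|s|+|t|)+s^2+t^2}$, which is integrable, and this justifies both Fubini and the coefficient matching.

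The citation buys brevity. Your route buys more than self-containedness, because the same generating function also yields the factorization of Proposition~\ref{prop:cholesky_decomposition} directly. Write $e^{-(s-t)^2/2}=e^{-s^2/2}\,e^{st}\,e^{-t^2/2}$ and expand $e^{st}=\sum_k (st)^k/k!$; this expresses $A$ as (lower triangular)(diagonal)(upper triangular). After absorbing the normalizations one gets exactly $A=L\Lambda L^\top$ with $\Lambda=\operatorname{diag}(2^{-k-\frac12})$, with no need for the hypergeometric identity of Lemma~\ref{lemma:identity_hypergeometric_series}.
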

\begin{proof}
The proof follows the same argument as in~\cite[Theorem 4.1]{dai2024quadratic}, 
to which we refer the reader for details.
\end{proof}
\begin{defi}[Gram kernel matrix]\label{defi:gram_kernel_matrix}
The infinite Gram kernel matrix
$Z = (z_{mn})_{m,n \geq 0} \in \mathbb{R}^{\mathbb{N} \times \mathbb{N}}$ of the problem
is an upper triangular matrix whose entries are given by
\[
\left\{
\begin{aligned}
&  z_{mn} = \sqrt{\dfrac{n!}{m!}}
   \dfrac{1}{4^{\frac{n-m}2}(\frac{n-m}2)!},
&& n \ge m, \quad n-m \text{ even}, \\
&  z_{mn} = 0,
&& \text{otherwise}.\\
\end{aligned}
\right.
\]
\end{defi}
\begin{defi}[Inverse Gram kernel matrix]\label{defi:inverse_gram_kernel_matrix}
The infinite inverse Gram kernel matrix 
$L = (l_{mn})_{m,n \geq 0} \in \mathbb{R}^{\mathbb{N} \times \mathbb{N}}$ 
of the problem 
is a lower triangular matrix whose entries are given by
\[
\left\{
\begin{aligned}
&  l_{mn} = (-1)^{\frac{n-m}{2}} z_{nm},
&& m \ge n, \quad m-n \text{ even}, \\
&  l_{mn} = 0,
&& \text{otherwise},\\
\end{aligned}
\right.
\]
where $z_{nm}$ are the entries of the Gram kernel matrix.
\end{defi}
\begin{lemma}\label{lem:Z_L_identity_matrix}
Under the above definition, the matrices $Z$ and $L$ satisfy
\[
Z L^\top = Z^\top L = I_{\infty},
\]
where $I_{\infty}$ denotes the infinite identity matrix. 
Hence $L^\top$ is a (two-sided) inverse of $Z$ in the sense of infinite matrix multiplication.
\end{lemma}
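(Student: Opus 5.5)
The plan is to compute both products entrywise. Because $Z$ is upper triangular and $L$ is lower triangular, every entry of each product is a finite sum, so the infinite matrix products are well defined. Each entry then reduces to a binomial identity.

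First I will make $L^\top$ explicit. By Definition~\ref{defi:inverse_gram_kernel_matrix}, $(L^\top)_{nk}=l_{kn}=(-1)^{\frac{k-n}{2}}z_{nk}$ when $k\ge n$ and $k-n$ is even, and it vanishes otherwise. So $L^\top$ is the upper triangular matrix $Z$ with the checkerboard sign $(-1)^{(k-n)/2}$ inserted. Then
\[
(ZL^\top)_{mk}=\sum_{n=m}^{k} z_{mn}\,(-1)^{\frac{k-n}{2}}\,z_{nk},
\]
where only indices with $n-m$ and $k-n$ both even contribute. This entry is zero unless $k\ge m$ and $k-m$ is even.

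Next I will write $p=\frac{k-m}{2}$, $i=\frac{n-m}{2}$ and $j=\frac{k-n}{2}=p-i$. From Definition~\ref{defi:gram_kernel_matrix} the factorials telescope:
\[
z_{mn}z_{nk}=\sqrt{\tfrac{n!}{m!}}\sqrt{\tfrac{k!}{n!}}\,\frac{1}{4^{i}\,i!\,4^{j}\,j!}=\sqrt{\tfrac{k!}{m!}}\,4^{-p}\,\frac{1}{i!\,(p-i)!}.
\]
Hence
\[
(ZL^\top)_{mk}=\sqrt{\tfrac{k!}{m!}}\,\frac{4^{-p}}{p!}\sum_{i=0}^{p}\binom{p}{i}(-1)^{p-i}=\sqrt{\tfrac{k!}{m!}}\,\frac{4^{-p}}{p!}\,(1-1)^p=\delta_{p0}.
\]
When $p=0$ we have $k=m$ and the prefactor equals $1$, so $ZL^\top=I_\infty$.

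For the second identity I will compute $(Z^\top L)_{mk}=\sum_n z_{nm}\,l_{nk}$ directly. Here $z_{nm}\neq0$ requires $n\le m$, and $l_{nk}\neq 0$ requires $n\ge k$, so the sum runs over $k\le n\le m$ with the parities matching. Using $l_{nk}=(-1)^{\frac{n-k}{2}}z_{kn}$, the summand becomes $z_{kn}z_{nm}(-1)^{\frac{n-k}{2}}$. This has the same telescoping structure as before with the roles of $m$ and $k$ swapped, and the same alternating binomial sum gives $\delta_{mk}$.

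Finally, transposing $Z^\top L=I_\infty$ gives $L^\top Z=I_\infty$. Together with $ZL^\top=I_\infty$, this shows $L^\top$ is both a left and a right inverse of $Z$. The only real care needed is the bookkeeping of index ranges and parities, in particular checking that odd $k-m$ gives an empty sum; I do not expect any genuine obstacle.
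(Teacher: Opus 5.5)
Your proposal is correct and takes essentially the paper's route: an entrywise computation in which the factorials in $z_{mn}z_{nk}$ telescope, leaving the alternating binomial sum $(1-1)^p/p!=\delta_{p0}$. The one difference is that the paper computes only $Z^\top L$ and asserts $ZL^\top=I_\infty$. You verify both products and note $L^\top Z=(Z^\top L)^\top$, which fully justifies the two-sided inverse claim.
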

\begin{proof}
By the definitions of $Z$ and $L$,
\[
  (Z^\top L)_{nm}
= \sum_{k=m}^n z_{kn} l_{km} 
= \sum_{k=m}^n (-1)^{\frac{m-k}{2}}z_{kn} z_{mk}.
\]
The summand is nonzero only when the factorials 
appearing in the definitions of $z_{kn}$ and $z_{mk}$ are well-defined. 
Thus, the sum can be nonzero only if $n \geq m$ and $n - m$ is even. 
In that case,
\[
  (Z^\top L)_{nm}
= \sum_{k=m}^{n} (-1)^{\frac{m-k}{2}} 
  \sqrt{ \dfrac{n!}{m!} } 
  \dfrac{1}{4^{\frac{n-m}{2}} (\frac{n-k}{2})! (\frac{k-m}{2})!}.
\]
Introducing
$ r = \frac{n-m}{2} $, 
$ j = \frac{n-k}{2} $,
we have
$ k = n - 2j $, and
$ j = 0,1,\ldots,r$.
Consequently,
\[
  (Z^\top L)_{nm}
= \sqrt{\frac{n!}{m!}}\,
  \frac{1}{4^r}
  \sum_{j=0}^{r} \frac{(-1)^{j-r}}{j!(r-j)!}.
\]
Thus, 
for \( n > m \), the sum vanishes.
For \( n = m \), the sum givs \(1\).
\end{proof}
\begin{lemma}
Consider the block matrices by truncation $N$ of $Z$, $L$.
Under the above definition, the matrices $Z_{11}$ and $L_{11}$ satisfy
\[
Z_{11} L_{11}^\top = Z_{11}^\top L_{11} = I_{N+1}.
\]
\end{lemma}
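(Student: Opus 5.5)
The plan is to deduce the finite-dimensional identity from Lemma \ref{lem:Z_L_identity_matrix} by exploiting the triangular structure of $Z$ and $L$. By Definition \ref{defi:gram_kernel_matrix}, $Z$ is upper triangular, so $Z_{21}=0$. By Definition \ref{defi:inverse_gram_kernel_matrix}, $L$ is lower triangular, so $L_{12}=0$.

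Writing the infinite products in block form, we first expand the $(1,1)$ block of $Z L^\top$:
\[
(ZL^\top)_{11} = Z_{11}L_{11}^\top + Z_{12}(L^\top)_{21} = Z_{11}L_{11}^\top + Z_{12}L_{12}^\top .
\]
Since $L_{12}=0$, the second term vanishes. Lemma \ref{lem:Z_L_identity_matrix} then gives $Z_{11}L_{11}^\top = I_{N+1}$.

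Similarly, for the $(1,1)$ block of $Z^\top L$ we have
\[
(Z^\top L)_{11} = Z_{11}^\top L_{11} + Z_{21}^\top L_{21} .
\]
This time the second term vanishes because $Z_{21}=0$, and Lemma \ref{lem:Z_L_identity_matrix} yields $Z_{11}^\top L_{11} = I_{N+1}$.

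The block products above involve infinite sums, so I will check that they are legitimate. Each entry of a product involving a triangular factor is in fact a finite sum. Concretely, $(ZL^\top)_{nm}=\sum_k z_{nk}l_{mk}$, and this requires $k\ge n$ together with $k\le m$, so only finitely many terms appear. Hence no convergence issue arises and the block decomposition holds entrywise.

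The one point I expect to need care is that Lemma \ref{lem:Z_L_identity_matrix} was only verified for $Z^\top L$, with the proof computing $(Z^\top L)_{nm}$. If the identity $ZL^\top=I$ is not regarded as established, I would argue as follows. Both $Z_{11}$ and $L_{11}^\top$ are finite upper-triangular matrices with unit diagonal, since $z_{nn}=l_{nn}=1$. From $Z_{11}^\top L_{11}=I_{N+1}$, the square matrices $Z_{11}^\top$ and $L_{11}$ are inverse to each other. Transposing gives $L_{11}^\top Z_{11}=I_{N+1}$. For square matrices a one-sided inverse is two-sided, so $Z_{11}L_{11}^\top=I_{N+1}$ as well.

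Alternatively, one can verify $ZL^\top=I$ directly by the same binomial computation used in the proof of Lemma \ref{lem:Z_L_identity_matrix}. Here
\[
(ZL^\top)_{nm}=\sum_{k}z_{nk}\,l_{mk},
\]
and a nonzero term requires $n\le k\le m$ with the relevant index differences even. The sum reduces to $\sqrt{m!/n!}\,4^{-r}\sum_{j=0}^{r}(-1)^{j}/\bigl(j!(r-j)!\bigr)$ (up to sign), which equals $\delta_{r0}$.
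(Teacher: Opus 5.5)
Your proposal is correct and is essentially the paper's argument. The paper's one-line proof rests on the fact that, because $Z$ and $L$ are triangular, the sums giving the $(n,m)$ entries of $Z^\top L$ and $ZL^\top$ with $n,m\le N$ only involve indices $k$ between $m$ and $n$, which is exactly what your vanishing blocks $Z_{21}=0$ and $L_{12}=0$ express. Your extra step from $Z_{11}^\top L_{11}=I_{N+1}$ to $Z_{11}L_{11}^\top=I_{N+1}$ (or the direct binomial check) is a useful addition, since the paper only computes $Z^\top L$ explicitly.

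One small caution: your remark that any product with a triangular factor has finite entries is too broad, since an upper triangular matrix times a lower triangular one gives sums over all $k\ge\max(n,m)$. The products you actually use, upper times upper and lower times lower, are indeed finite, as your concrete check shows.
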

\begin{proof}
The proof follows directly from the argument given in Lemma~\ref{lem:Z_L_identity_matrix}.
\end{proof}

\begin{lemma}\label{lemma:identity_hypergeometric_series}
For all non-negative integers $m,n \in \mathbb{N}$ such that $m+n$ is even, the following identity holds:
\[
  \sum_{\substack{k=0 \\ k\equiv n \,(\mathrm{mod}\,2) }}^{\operatorname{min}(n,m)}
  \dfrac{m!n!}{k!} 
  \dfrac{1}{2^{-k} (\frac{n-k}{2})! (\frac{m-k}{2})!}
= \dfrac{(m+n)!}{(\frac{m+n}{2})!}.
\]
\end{lemma}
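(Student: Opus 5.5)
The plan is to read both sides of the identity as the same Gaussian moment. The right-hand side is, up to a power of $2$, a moment of $e^{-v^2}$. The left-hand side comes from expanding each monomial in the Hermite basis and using the orthogonality of $H_k$ with respect to $e^{-v^2}$. Throughout, write $n=k+2a$ and $m=k+2b$ with $a,b\ge 0$; the constraint $k\equiv n \pmod 2$ together with $m+n$ even makes $a,b$ integers. Set $p=\frac{m+n}{2}$.

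\emph{Step 1: the right-hand side as a moment.} I will use the classical moment formula
\[
\frac{1}{\sqrt{\pi}}\int_{\mathbb{R}} v^{2p} e^{-v^2}\,dv=\frac{(2p)!}{4^p\,p!}.
\]
It gives
\[
\frac{(m+n)!}{(\frac{m+n}{2})!}=\frac{2^{m+n}}{\sqrt\pi}\int_{\mathbb{R}} v^{n}v^{m}e^{-v^2}\,dv .
\]

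\emph{Step 2: expand the monomials.} I will use the inversion formula for Hermite polynomials,
\[
v^n=\frac{n!}{2^n}\sum_{a=0}^{\lfloor n/2\rfloor}\frac{H_{n-2a}(v)}{a!\,(n-2a)!}.
\]
It can be read off from the generating function: $e^{2vt}=e^{t^2}\,e^{2vt-t^2}=e^{t^2}\sum_j H_j(v)t^j/j!$, followed by comparing the coefficients of $t^n$. Insert this expansion for both $v^n$ and $v^m$. Then use the orthogonality relation
\[
\frac{1}{\sqrt\pi}\int_{\mathbb{R}} H_kH_\ell e^{-v^2}\,dv=2^k k!\,\delta_{k\ell},
\]
which is the same orthogonality that underlies $(\psi_n,\psi_m)_{\omega dv}=\delta_{nm}$. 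Only the diagonal terms $n-2a=m-2b=k$ survive, and $k$ runs over $0\le k\le\min(n,m)$ with $k\equiv n\pmod 2$. This yields
\[
\frac{1}{\sqrt\pi}\int_{\mathbb{R}} v^{n+m}e^{-v^2}\,dv=\frac{n!\,m!}{2^{n+m}}\sum_{k}\frac{2^k k!}{a!\,b!\,(k!)^2}
=\frac{1}{2^{n+m}}\sum_k \frac{m!\,n!}{k!}\,\frac{2^{k}}{(\frac{n-k}{2})!\,(\frac{m-k}{2})!}.
\]

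\emph{Step 3: compare.} Multiply the last display by $2^{n+m}$ and compare with Step 1. Since $\frac{1}{2^{-k}}=2^k$, the result is exactly the claimed identity.

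The only point needing care is the bookkeeping of the index ranges: the parity condition and the bound $k\le\min(n,m)$ arise precisely as the conditions for $a,b$ to be nonnegative integers. The rest is routine once the inversion formula for $v^n$ is in hand, so I expect that formula, recovered quickly from the generating function, to be the main step. An alternative, if one prefers to avoid the Hermite basis altogether, is a direct generating-function computation in two variables: compare the coefficients of $s^n t^m$ in $e^{s^2/4+t^2/4+st/2}=e^{(s+t)^2/4}$. This is essentially the same calculation, and I would keep it as a fallback.
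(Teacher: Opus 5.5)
Your proof is correct, but it follows a different route from the paper's.

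\textbf{The paper's route.} The paper treats the identity as a pure summation problem. It sketches only the case $m,n$ even; writing $n=2\nu$, $m=2\mu$, $k=2j$, the sketch would run as follows. One rewrites the summand in Pochhammer symbols, which amounts to $(2j)!=4^j j!\,(\tfrac12)_j$ and $1/(\nu-j)!=(-1)^j(-\nu)_j/\nu!$. One then recognizes a terminating ${}_2F_1(-\nu,-\mu;\tfrac12;1)$ and closes with Chu--Vandermonde. The odd case, where the lower parameter becomes $\tfrac32$, is left as ``analogous.''

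\textbf{Your route.} You compute the single Gaussian moment $\pi^{-1/2}\int_{\mathbb{R}} v^{n+m}e^{-v^2}\,dv$ in two ways. First directly, through the Gamma function. Then by expanding $v^n$ and $v^m$ with the Hermite inversion formula and applying orthogonality. All your ingredients check out: the inversion formula from $e^{2vt}=e^{t^2}e^{2vt-t^2}$, the prefactor $n!\,m!/2^{n+m}$, and the observation that requiring $a,b\in\mathbb{N}$ gives exactly the summation range.

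\textbf{What each buys.} Your argument handles both parities at once with no case split, and the index constraints come out automatically instead of being tracked by hand. It also explains where the identity comes from. It is the same mechanism, a non-orthogonal family expanded in an orthogonal one, that drives the factorization $A=L\Lambda L^\top$ of Proposition~\ref{prop:cholesky_decomposition}. The paper's route is shorter for a reader fluent in hypergeometric identities, but it leaves the Pochhammer bookkeeping and the odd case implicit.

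Your two-variable fallback, comparing coefficients of $s^nt^m$ in $e^{(s+t)^2/4}$, also works. It is the formal-power-series version of the same moment computation, since $e^{(s+t)^2/4}=\pi^{-1/2}\int_{\mathbb{R}} e^{(s+t)v}e^{-v^2}\,dv$.
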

\begin{proof}
We proof the case in which $m, n$ are even; the remaining cases follow analogously. 
First, we rewrite the left-hand side in terms of Pochhammer symbols. 
This allows us to recognize the resulting expression as a hypergeometric series.
Using the special case of the hypergeometric function, 
\[
{}_2F_1(-m,b;c;1) = \dfrac{(c-b)_m}{(c)_m},
\]
we complete the proof.
\end{proof}
\begin{proposition}[Cholesky decomposition of the Gram matrix]
\label{prop:cholesky_decomposition}
The Gram matrix $A$ admits a unique Cholesky decomposition of the form
\[
A = L \Lambda L^\top,
\]
where $L$ is the inverse Gram kernel matrix, and $\Lambda$ is a diagonal matrix given by 
\[
\Lambda = \operatorname{diag}( 2^{-m-\frac{1}{2}} )_{m\geq 0}.
\]
\end{proposition}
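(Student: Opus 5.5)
The plan is to verify the identity $A = L\Lambda L^\top$ entrywise, using Lemma~\ref{lemma:entries_of_gram_matrix} for the left-hand side and Lemma~\ref{lemma:identity_hypergeometric_series} to evaluate the right-hand side. Uniqueness will then follow from the finite-dimensional theory through the truncations $A_{11}$.

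\emph{Step 1 (entrywise product).} Since $L$ is lower triangular and $\Lambda$ is diagonal, each entry is a finite sum:
\[
(L\Lambda L^\top)_{nm} = \sum_{k=0}^{\min(n,m)} l_{nk}\,2^{-k-\frac12}\,l_{mk}.
\]
By Definition~\ref{defi:inverse_gram_kernel_matrix}, $l_{nk}\neq 0$ requires $n-k$ even, and $l_{mk}\neq0$ requires $m-k$ even.
\begin{itemize}
\item If $n+m$ is odd, no $k$ has the same parity as both $n$ and $m$. Hence the entry is $0$, which agrees with $a_{nm}=0$.
\item If $n+m$ is even, substitute
\[
l_{nk} = (-1)^{\frac{n-k}{2}}\sqrt{\frac{n!}{k!}}\,\frac{1}{4^{\frac{n-k}{2}}(\frac{n-k}{2})!}
\]
and the analogous expression for $l_{mk}$.
\end{itemize}

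\emph{Step 2 (sign and powers of two).} In the even case, the sign is $(-1)^{\frac{n-k}{2}+\frac{m-k}{2}} = (-1)^{\frac{n-m}{2}}(-1)^{m-k} = (-1)^{\frac{n-m}{2}}$, because $m-k$ is even. The powers of two combine as
\[
2^{-(n-k)}\,2^{-(m-k)}\,2^{-k-\frac12} = 2^{-(n+m)-\frac12}\,2^{k}.
\]
Writing $\sqrt{n!m!}/k! = \bigl(n!m!/k!\bigr)/\sqrt{n!m!}$, we obtain
\[
(L\Lambda L^\top)_{nm} = (-1)^{\frac{n-m}{2}}\,\frac{2^{-(n+m)-\frac12}}{\sqrt{n!m!}} \sum_{k\equiv n\,(2)} \frac{m!\,n!}{k!}\,\frac{2^{k}}{(\frac{n-k}{2})!\,(\frac{m-k}{2})!}.
\]
The remaining sum is exactly the left-hand side of Lemma~\ref{lemma:identity_hypergeometric_series}, which equals $(n+m)!/(\frac{n+m}{2})!$. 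Substituting this recovers the formula for $a_{nm}$ in Lemma~\ref{lemma:entries_of_gram_matrix}, so $A = L\Lambda L^\top$.

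\emph{Step 3 (uniqueness).} The diagonal entries satisfy $l_{mm}=z_{mm}=1$, so $L$ is unit lower triangular, and $\Lambda$ has positive entries. Because $L$ is lower triangular, truncation commutes with the product, giving
\[
A_{11} = L_{11}\Lambda_{11}L_{11}^\top \quad\text{for every } N.
\]
Each $A_{11}$ is symmetric positive definite, since it is the Gram matrix of linearly independent functions. Its unit-lower-triangular $LDL^\top$ factorization, equivalently its Cholesky factor $L_{11}\Lambda_{11}^{1/2}$, is therefore unique. Letting $N$ vary shows that every finite section of the infinite factors is determined by $A$, so the infinite factorization is unique.

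The main obstacle is the exponent and sign bookkeeping in Step~2, in particular checking that the factor $2^{k}$ produced by $\Lambda$ combines with the $4^{-(n-k)/2}$ and $4^{-(m-k)/2}$ factors to match exactly the normalization in Lemma~\ref{lemma:identity_hypergeometric_series}. I would first check this on small cases, such as $(n,m)=(0,0),(2,0),(1,1)$, to confirm the constant $2^{-\frac12}$.
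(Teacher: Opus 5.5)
Your proposal is correct and follows the paper's route: you expand $L\Lambda L^\top$ entrywise and reduce the resulting sum to Lemma~\ref{lemma:identity_hypergeometric_series}, and your sign and power-of-two bookkeeping checks out. Your finite-section argument for uniqueness, using that $L$ is unit lower triangular and each $A_{11}$ is symmetric positive definite, just makes explicit what the paper asserts in one line from the positive definiteness of $A$.
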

\begin{proof}
Since $A$ is symmetric positive definite, 
it admits an unique Cholesky factorization. 
Expanding the product $L \Lambda L^\top$ explicitly and using 
Lemma~\ref{lemma:identity_hypergeometric_series}, we complete the proof.
\end{proof}
\begin{lemma}\label{lemma:equation_some_2}
For a fixed integer $N$, 
consider the block matrices of $A$, $Z$ and $L$.
Then the following identity holds:
\[
A_{11}^{-1}A_{12} = Z_{11}L^\top_{21} = -Z_{12}L^\top_{22}.
\]
\end{lemma}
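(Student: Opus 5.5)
We will read the two identities off the block structure of the Cholesky factorization $A = L\Lambda L^\top$ from Proposition~\ref{prop:cholesky_decomposition}, together with the inverse relation $Z^\top L = I_\infty$ from Lemma~\ref{lem:Z_L_identity_matrix}.

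\textbf{Block form of $A_{11}$ and $A_{12}$.} Since $L$ is lower triangular, its truncation satisfies $L_{12}=0$. Partitioning $A = L\Lambda L^\top$ at level $N$ then gives
\[
A_{11} = L_{11}\Lambda_{11}L_{11}^\top, \qquad A_{12} = L_{11}\Lambda_{11}L_{21}^\top .
\]
The second formula uses $L_{12}=0$ twice, once in $L$ and once as $(L^\top)_{21}$. All products involved are finite sums because of the triangular structure, so the block multiplication is justified.

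\textbf{First identity.} $L_{11}$ is unit-lower-triangular-like with nonzero diagonal $l_{mm}=z_{mm}=1$, and $\Lambda_{11}$ is invertible. Hence
\[
A_{11}^{-1}A_{12} = L_{11}^{-\top}\Lambda_{11}^{-1}L_{11}^{-1}L_{11}\Lambda_{11}L_{21}^\top = L_{11}^{-\top}L_{21}^\top .
\]
By the truncated lemma, $Z_{11}^\top L_{11} = I_{N+1}$, which gives $L_{11}^{-1}=Z_{11}^\top$. Therefore $L_{11}^{-\top}=Z_{11}$, and we obtain $A_{11}^{-1}A_{12} = Z_{11}L_{21}^\top$.

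\textbf{Second identity.} We take the $(1,2)$ block of $ZL^\top = I_\infty$. Since $Z$ is upper triangular, $Z_{21}=0$, so
\[
(ZL^\top)_{12} = Z_{11}(L^\top)_{12} + Z_{12}(L^\top)_{22} = Z_{11}L_{21}^\top + Z_{12}L_{22}^\top = 0 .
\]
This gives $Z_{11}L_{21}^\top = -Z_{12}L_{22}^\top$.

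\textbf{Main obstacle.} The only delicate point is justifying the infinite-matrix products in the second step, in particular the entries of $Z_{12}L_{22}^\top$. Since $Z$ is upper triangular and $L^\top$ is upper triangular, each entry $(ZL^\top)_{nm}$ sums only over $n\le k\le m$. All sums are therefore finite, and associativity and block manipulation hold without any convergence issue. This point, together with checking which truncated blocks vanish, is the step we would verify carefully; the rest is direct algebra.
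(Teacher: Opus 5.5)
Your proof is correct, but it reaches the first identity by a different route than the paper. The paper never splits the Cholesky factorization into blocks. It forms the global product $AZ = L\Lambda L^\top Z = L\Lambda$ and notes that $L\Lambda$ is lower triangular (the paper's text says ``upper'', which is a slip). This gives $A_{11}Z_{12}+A_{12}Z_{22}=0$. The paper then right-multiplies by $L_{22}^\top$ and uses the $(1,2)$ and $(2,2)$ blocks of $ZL^\top=I_\infty$, namely $Z_{12}L_{22}^\top=-Z_{11}L_{21}^\top$ and $Z_{22}L_{22}^\top=I$, to get $A_{12}=A_{11}Z_{11}L_{21}^\top$. So the paper establishes the second equality first and deduces the first from it. You derive the two equalities independently: $A_{11}^{-1}A_{12}=L_{11}^{-\top}L_{21}^\top$ comes directly from the block Cholesky factors, and $L_{11}^{-\top}=Z_{11}$ comes from the truncated identity $Z_{11}^\top L_{11}=I_{N+1}$, which the paper states but does not invoke here. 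Your second equality comes from the $(1,2)$ block of $ZL^\top=I_\infty$, exactly as in the paper. Your route has the advantage that the dense block $A_{12}$ never enters an infinite product. All inversions are of finite triangular or diagonal matrices, and invertibility of $A_{11}$ is visible from $A_{11}=L_{11}\Lambda_{11}L_{11}^\top$. The paper's route instead needs associativity of $(A_{12}Z_{22})L_{22}^\top$. That does hold, since triangularity of $Z$ and $L^\top$ makes all sums finite, but it has to be checked. In exchange, the paper works only with untruncated identities and reads off the truncation at the end. One small point: $Z_{21}=0$ is not needed for the $(1,2)$ block of $ZL^\top$; it only enters the $(2,2)$ block, which you do not use.
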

\begin{proof}
By  Lemma \ref{lem:Z_L_identity_matrix} 
and Proposition \ref{prop:cholesky_decomposition},
we have
\[
A Z = L \Lambda L^\top Z = L \Lambda.
\]
Since $L\Lambda$ is upper triangular,
we have
\[
\renewcommand{\arraystretch}{1.5}
\begin{bmatrix}
A_{11} & A_{12} \\
\end{bmatrix}
\begin{bmatrix}
Z_{12} \\
Z_{22} \\
\end{bmatrix}
= 0.
\]
Since the Gram kernel matrix $Z$ is upper triangular and
the inverse Gram kernel matrix $L$ is lower triangular,
we have
\[
Z_{12}L^\top_{22} = -Z_{11}L^\top_{21}.
\]
Take the following product
\[
\renewcommand{\arraystretch}{1.5}
\begin{bmatrix}
A_{11} & A_{12} \\
\end{bmatrix}
\begin{bmatrix}
Z_{12} \\
Z_{22} \\
\end{bmatrix}
L^\top_{22}
=
\begin{bmatrix}
A_{11} & A_{12} \\
\end{bmatrix}
\begin{bmatrix}
-Z_{11}L^\top_{21}  \\
I_{22} \\
\end{bmatrix}
=0,
\]
which leads to
\[
A_{11}^{-1}A_{12} = Z_{11}L^\top_{21} = -Z_{12}L^\top_{22}.
\]
\end{proof}

\subsection{Equivalent Hermite/Fourier spectral method}
In what follows, we address this computational challenge.
By considering the block vectors of $U_{\tau}$ and $\Phi$ 
in Equation~\eqref{eqn:approximation_ftau_matrix}, 
we can equivalently write Equation~\eqref{eqn:approximation_ftau_matrix} as
\begin{equation}\label{eqn:block_approximation_ftau_matrix}
\begin{aligned}
   \int_{\Omega_x} \left\langle \partial_t U_{\tau,1}, \Phi_1 \right\rangle dx
+ &\int_{\Omega_x} \sqrt{\dfrac{T}{2}} 
   \left\langle (B_{11} + A_{11}^{-1}A_{12}B_{21}) \partial_x U_{\tau,1}, 
   \Phi_1 \right\rangle dx \\
- &\int_{\Omega_x} \sqrt{\dfrac{2}{T}} E_{\tau} 
   \left\langle (D_{11} + A_{11}^{-1}A_{12}D_{21}) U_{\tau,1}, 
   \Phi_1 \right\rangle dx 
= 0.
\end{aligned}
\end{equation}
Using Lemma~\ref{lemma:equation_some_2} 
in the evolution equation~\eqref{eqn:block_approximation_ftau_matrix}, 
we obtain
\begin{equation}\label{eqn:equivalent_approximation_ftau_matrix}
\begin{aligned}
   \int_{\Omega_x} \left\langle \partial_t U_{\tau,1}, \Phi_1 \right\rangle dx
+ &\int_{\Omega_x} \sqrt{\dfrac{T}{2}} 
   \left\langle (B_{11} - Z_{12}L_{22}^{\top}B_{21}) \partial_x U_{\tau,1}, 
   \Phi_1 \right\rangle dx \\
- &\int_{\Omega_x} \sqrt{\dfrac{2}{T}} E_{\tau} 
   \left\langle (D_{11} - Z_{12}L_{22}^{\top}D_{21}) U_{\tau,1}, 
   \Phi_1 \right\rangle dx 
= 0.
\end{aligned}
\end{equation}
Since $B_{21}$ (or $D_{21}$) has only one nonzero entry, 
located in its first row, last column, 
the matrix $B_{11} - Z_{12}L_{22}^{\top}B_{21}$ 
(or $D_{11} - Z_{12}L_{22}^{\top}D_{21}$) 
differs from $B_{11}$ (or $D_{11}$) only in its last column.
Thus, the proposed Galerkin method 
preserves the sparsity structure 
of the classical Petrov–Galerkin method, 
apart from the additional nonzero entries in the last column.
From a numerical standpoint, 
the proposed formulation therefore avoids 
the direct use of the dense Gram matrix 
while introducing only a negligible additional computational cost.

\section{Conclusion}\label{sec:conclusion}

In this article, we investigate the convergence analysis of 
a Galerkin spectral method based on 
AW Hermite/Fourier functions for the VP system  
and establish its equivalent form.
In particular, 
we employ a Galerkin method rather than the classical Petrov-Galerkin method, 
which naturally leads to the stability of the numerical method.
For the convergence analysis, 
we have derived an estimate for the consistency error 
between the numerical solution of the VP system and its global projection. 
A key observation is that, 
owing to the structure of the AW Hermite/Fourier discretized system 
within the framework of the Galerkin method, 
the leading term $J_1$ of the consistency error vanishes. 
This property plays a role in obtaining the estimate of the consistency error,
consequently, in establishing the convergence of the proposed spectral method.
Finally, we carefully analyze the Gram matrix and its associated matrices, 
which play a crucial role in establishing the equivalent form 
that preserves the sparsity structure.
Although, our analysis is an one dimensional case, 
The proposed Galerkin spectral method can be extended straightforwardly 
to 2D and 3D velocity spaces since we can use the Kronecker product directly.
Therefore, in a future work, we would like to adapt our approach 
to the multi-dimensional case following the idea in this work.

\appendix

\end{document}